\theoremstyle{plain}
\newtheorem*{theorem*}{Theorem}
\newtheorem{theorem}{Theorem}
\newtheorem{lem}[theorem]{Lemma}
\newtheorem{prop}[theorem]{Proposition}
\newtheorem{proposition}[theorem]{Proposition}
\theoremstyle{definition}
\newtheorem{definition}[theorem]{Definition}
\theoremstyle{remark}
\newcommand{\vphi}{\varphi}
\newcommand{\pl}{\partial}
\newcommand{\lt}{\left}
\newcommand{\rt}{\right}
\newcommand{\rw}{\rightarrow}
\newcommand{\R}{\mathbb{R}}
\renewcommand{\tilde}{\widetilde}
\newcommand{\mbf}{\mathbf}
\title[Convex body isoperimetric conjecture]{A note on the convex body isoperimetric conjecture in the plane}
\author[]{Bo-Hshiung Wang and Ye-Kai Wang}
\address{Department of Mathematics, National Cheng Kung University}
\thanks{The second author is supported by MOST Taiwan grant 109-2628-M-006-001-MY3 and he would like to thank Professor Kwok-Kun Kwong for the discussions.}
\begin{document}
\begin{abstract}
The convex body isoperimetric conjecture in the plane asserts that the least perimeter to enclose given area inside a unit disk is greater than inside any other convex set of area $\pi$. In this note we confirm two cases of the conjecture: domains symmetric to both coordinate axes and perturbations of unit disk.
\end{abstract}

\maketitle
\section{Introduction}
Let $\Omega \subset \R^2$ be a bounded domain. For $0<A<\mbox{Area}(\Omega)$, consider the variational problem
\begin{align}\label{varitional problem}
I_\Omega(A) = \min \{ \mbox{Length}(\gamma) : \gamma \mbox{ encloses a region of area } A \mbox{ inside } \Omega \}.
\end{align}
The function $I_\Omega: (0,\mbox{Area}(\Omega)) \rw (0,\infty)$ is called the {\it isoperimetric profile} of $\Omega$. Note that \begin{align}
I_\Omega(A) = I_\Omega(\mbox{Area}(\Omega) - A).
\end{align}

The convex body isoperimetric conjecture in the plane asserts that if $\Omega$ is a convex domain of area $\pi$, then
\begin{align}
I_\Omega(A) \le I_{B_1}(A)
\end{align} for $0<A<\pi$.

We first learned this conjecture in M. Hutching's webpage \cite{Hutchings}. He attributes it to Wicharamala. F. Morgan's blog contains an extensive discussion of the conjecture \cite{Morgan}, mostly focusing on its higher dimensional version.

The conjecture is completely solved for $A = \frac{\pi}{2}$ by Esposito et. al. 
\begin{theorem}\cite[Theorem 1]{EFKNT}
If $K$ is an open convex set of $\R^2$, we have:
\[ \inf_{G \subset K, |G| = |K|/2} Per(G;K)^2 \le \frac{4}{\pi}|K|. \]
Moreover, equality holds if and only if $K$ is a disk. 
\end{theorem}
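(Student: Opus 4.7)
My plan is to apply the direct method of the calculus of variations, classify the geometric structure of the minimizer, and reduce to a shortest-chord inequality, which I can handle in the centrally symmetric case by an identity plus Cauchy--Schwarz.

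\emph{Existence and classification.} By $L^1$-lower semicontinuity of the relative perimeter together with $BV$-compactness, the infimum is attained by some $G^\star \subset K$ with $|G^\star| = |K|/2$. The regularity theory for almost-minimizers of the relative perimeter in convex domains (Gonz\'alez--Massari--Tamanini) implies that the interior trace $\Gamma := \partial^\star G^\star \cap K$ is a $C^{1,\alpha}$ curve of constant geodesic curvature meeting $\partial K$ orthogonally at smooth points. In dimension two this forces $\Gamma$ to be a union of line segments and circular arcs of a common radius; connectedness together with the convexity of $K$ restricts $\Gamma$ to be either a single chord or a single circular arc.

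\emph{Reduction to a shortest-chord inequality.} Since any straight chord of $K$ that bisects the area is an admissible competitor, we have
\[ \mathrm{Per}(G^\star; K) \le \min_{\theta} \ell(\theta), \]
where $\ell(\theta)$ denotes the length of the area-bisecting chord perpendicular to direction $\theta$. It therefore suffices to prove the chord estimate $\min_\theta \ell(\theta)^2 \le (4/\pi)|K|$ for every convex $K$, with equality iff $K$ is a disk.

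\emph{The centrally symmetric case.} For centrally symmetric $K$ with center at the origin, every bisecting chord passes through the origin, so $\ell(\theta) = 2\rho(\theta + \pi/2)$, where $\rho(\phi)$ is the polar radius of $K$. The area identity $|K| = \frac{1}{2}\int_0^{2\pi}\rho(\phi)^2\,d\phi$ combined with the $\pi$-periodicity of $\rho^2$ (from central symmetry) yields
\[ \int_0^\pi \ell(\theta)^2\,d\theta = 4|K|. \]
Consequently $\min_\theta \ell(\theta)^2 \le \frac{1}{\pi}\int_0^\pi \ell(\theta)^2\,d\theta = (4/\pi)|K|$. Equality forces $\ell$ constant in $\theta$, hence $\rho$ is constant and $K$ is a disk.

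\emph{The general case and main obstacle.} For non-centrally-symmetric $K$, the plan is to reduce to the centrally symmetric case via an appropriate symmetrization such as iterated Steiner symmetrizations or the Minkowski symmetrization $K \mapsto \frac{1}{2}(K + (-K))$, verifying that the operation does not decrease the ratio $\min_\theta \ell(\theta)^2 / |K|$. A direct alternative uses the structural information from the classification step: when $\Gamma$ is a chord, first-variation analysis shows $\Gamma$ is a double-normal of $K$, and minimality against small circular-arc competitors should force $K$ to contain a closed disk of diameter $|\Gamma|$, yielding $|K| \ge \pi|\Gamma|^2/4$; when $\Gamma$ is a circular arc meeting $\partial K$ perpendicularly, a sector comparison combined with a minimality-driven angle bound gives the estimate. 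The technical heart of the proof lies in carrying out either the symmetrization-monotonicity verification or the direct non-symmetric competitor argument, which I expect to be the main obstacle.
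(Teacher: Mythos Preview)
The paper does not contain a proof of this theorem: it is quoted verbatim from \cite{EFKNT} as background for the convex body isoperimetric conjecture, and no argument for it appears anywhere in the text. So there is no ``paper's own proof'' to compare your proposal against.

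Assessing your proposal on its own merits: the existence/regularity step and the centrally symmetric computation are fine, and the integral identity $\int_0^\pi \ell(\theta)^2\,d\theta = 4|K|$ is a clean way to get the bound and the rigidity in that case. The genuine gap is exactly where you flag it. Neither of the two routes you sketch for the non-symmetric case actually closes. For Minkowski symmetrization $\tilde K = \tfrac{1}{2}(K+(-K))$, Brunn--Minkowski gives $|\tilde K|\ge |K|$, and the \emph{widths} of $K$ and $\tilde K$ agree in every direction; but the bisecting chords of the centrally symmetric $\tilde K$ are governed by its \emph{radial} function, not its support function, so there is no obvious inequality relating $\min_\theta \ell_{\tilde K}(\theta)$ to the bisecting chords of $K$. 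Thus the monotonicity of $\min_\theta \ell(\theta)^2/|K|$ that you need is unproved and not evidently true. Iterated Steiner symmetrization has the same problem: it preserves area and the bisecting chord orthogonal to the axis, but gives no control on the minimum over all directions. For the ``direct'' alternative, the claim that minimality against circular-arc competitors forces $K$ to contain a disk of diameter $|\Gamma|$ is precisely the hard part of the EFKNT theorem and cannot be asserted without a real argument; first variation only tells you that $\Gamma$ meets $\partial K$ orthogonally, not that $\partial K$ curves away from $\Gamma$ on the correct side globally.

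In short, you have reduced the theorem to the chord inequality $\min_\theta \ell(\theta)^2 \le \tfrac{4}{\pi}|K|$ and proved that inequality only for centrally symmetric $K$. The passage to general convex $K$ is the entire content of the result and remains open in your write-up.
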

In addition to being interesting on its own, the theorem leads to several relative isoperimetric inequalities. The conjecture also holds true for regular polygons \cite[Theorem 4.1]{Berry}. Besides these two cases, little is known.

In this note, we confirm the conjecture for two special cases. The first is
\begin{theorem}[Theorem \ref{bi-symmetric}]\label{bi-symmetric intro}
Let $\mathcal{A}$ be the class of domains $\Omega$ bounded by smooth convex curves that are symmetric in both coordinate axes and have exactly four vertices. Suppose that $\Omega \in \mathcal{A}$ has area $\pi$ and is not a unit disk. Then
\[ I_\Omega(A) < I_{B_1}(A) \]
for $0<A<\pi$.
\end{theorem}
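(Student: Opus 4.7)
The plan is to exploit the bi-symmetry of $\Omega$, together with the classification of planar isoperimetric minimizers in convex bodies, to compare $I_\Omega(A)$ with $I_{B_1}(A)$ via explicit competitors. Using the symmetry $I_\Omega(A) = I_\Omega(\pi - A)$, it suffices to treat $A \in (0, \pi/2]$. A standard interior-rolling-ball argument shows that if $\kappa_{\pl\Omega} \le 1$ pointwise then $\Omega \supseteq B_1$, which combined with $|\Omega| = \pi$ would force $\Omega = B_1$; hence $\Omega \neq B_1$ implies $\max_{\pl\Omega}\kappa > 1$. Together with the four-vertex hypothesis and bi-symmetry, the critical points of $\kappa$ lie exactly at $(\pm a,0), (0,\pm b)$, and after a $\pi/2$-rotation if needed I assume $\kappa$ attains its maximum on the $x$-axis, so that $\kappa > 1$ on an open arc around $(a,0)$ and $\kappa$ is strictly monotone on each quadrant arc of $\pl \Omega$.

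For the central case $A = \pi/2$, bi-symmetry yields central symmetry of $\Omega$, so every chord through the origin area-bisects $\Omega$. Writing $\rho(\theta)$ for the radial function, the identity $\tfrac{1}{2}\int_0^{2\pi}\rho^2\,d\theta = \pi$ together with $\rho$ non-constant (since $\Omega \neq B_1$) forces $\min_\theta \rho(\theta) < 1$; the bisecting chord in this minimizing direction has length $2\min_\theta \rho(\theta) < 2 = I_{B_1}(\pi/2)$, establishing the strict inequality at $\pi/2$. For $A \in (0, \pi/2)$ I would construct a one-parameter family of circular arcs in $\Omega$ meeting $\pl \Omega$ orthogonally and symmetric about the $x$-axis, with centers on the positive $x$-axis; as the radius varies from $0$ to $\infty$, the lens cut off near $(a, 0)$ has area sweeping $(0, \pi/2)$ continuously by the intermediate value theorem. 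Denote its length by $L_\Omega(A)$ and construct $L_{B_1}(A)$ analogously in the disk; the theorem then reduces to the strict comparison $L_\Omega(A) < L_{B_1}(A)$.

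The main obstacle is this arc-length comparison. My intended approach is via the first-variation identity, which expresses $dL/dA$ in terms of the geodesic curvature of the arc and the boundary curvature at the attachment points. For $A$ small the attachment points cluster near $(a,0)$ where $\kappa_{\pl\Omega} > 1$, giving the infinitesimal comparison; as $A$ grows, however, the attachment points drift along the quadrant arc toward $(0,b)$ where $\kappa_{\pl\Omega}$ is smaller and eventually below $1$, so the pointwise inequality alone is insufficient. To close the argument I would integrate the first-variation relation along the whole family, exploiting the monotonicity of $\kappa$ on each quadrant arc furnished by the four-vertex hypothesis to control the cumulative deviation from the disk; and for $A$ near $\pi/2$, where the axis-centered arc family degenerates, I would patch the argument using translates of the shorter-than-$2$ bisecting chord from the previous step, invoking continuity of $I_\Omega$ to interpolate between the two regimes.
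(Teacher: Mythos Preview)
Your competitors (the $x$-axis-symmetric circular arcs meeting $\partial\Omega$ orthogonally) coincide with the paper's, and you correctly reduce the theorem to $L_\Omega(A) < L_{B_1}(A)$ on $(0,\pi/2]$. But the core comparison is only sketched, and the sketch has a gap. The first-variation identity for perfect arcs is $dL/dA = k$, with $k$ the curvature of the \emph{arc}; the boundary curvature $\kappa$ does not appear. Thus ``integrating the first-variation relation along the family'' just gives $L_\Omega(A)=\int_0^A k_\Omega$ and $L_{B_1}(A)=\int_0^A k_{B_1}$, and nothing in your outline produces a pointwise or integrated inequality between $k_\Omega$ and $k_{B_1}$ from the monotonicity of $\kappa$ alone. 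The patching near $A=\pi/2$ via the short bisecting chord is also unnecessary (the arc family extends continuously to the minor-axis chord there) and does not address the intermediate range.

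The paper fills this gap by parametrizing the arcs via the normal angle $\theta\in(0,\pi/2]$ at the upper endpoint, so that $L=(\pi-2\theta)/k$ and in fact $L(\theta)=y(\theta)(\pi-2\theta)/\cos\theta$, where $y(\theta)$ is the endpoint height. The key lemma is that $y(\theta)<y^*(\theta)=\sin\theta$ for every $\theta$: one has $y(\theta)-y^*(\theta)=\int_0^\theta\cos\omega\,(\kappa(\omega)^{-1}-1)\,d\omega$, which is unimodal because $\kappa$ crosses $1$ exactly once on $(0,\pi/2)$ (this is where the four-vertex hypothesis enters), and if it were ever nonnegative then $y(\pi/2)\ge 1$, forcing $B_1\subset\Omega$ and contradicting $|\Omega|=\pi$. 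With this lemma and the monotonicity of $L$ in $\theta$, one gets $L(\theta)<L^*(\theta^*)$ whenever $\theta\le\theta^*$. The proof then examines the maximum of $L/L^*$ as a function of $A$: at an interior critical point $kL^*=k^*L$, which combined with $L=(\pi-2\theta)/k$ yields $(\pi-2\theta)/(\pi-2\theta^*)=(L/L^*)^2$; then either $\theta>\theta^*$ or $\theta\le\theta^*$ forces $L/L^*<1$. These two steps---the endpoint-height comparison and the ratio-maximization argument---are the missing ideas in your proposal.
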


The class $\mathcal{A}$ has been studied by \cite{AB}. It serves as a model for a comparison theorem of isoperimetric profile which leads to a new proof of Grayson-Gage-Hamilton Theorem for curve-shortening flow. The crucial fact about $\mathcal{A}$ is that the minimizers of  \eqref{varitional problem} for $\Omega \in \mathcal{A}$ admits a simple characterization.

The second result is obtained by analyzing the first and second variation of length under area-preserving perturbation. 
\begin{theorem}[Theorem \ref{perturbation}]\label{perturbation short}
The conjecture holds for perturbations of the unit disk.
\end{theorem}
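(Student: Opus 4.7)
The plan is to Taylor expand a suitable comparison length along a smooth area-preserving family $\{\Omega_t\}$ of convex domains with $\Omega_0 = B_1$. Parametrize $\partial \Omega_t$ in polar coordinates as $r(\theta, t) = 1 + t h_1(\theta) + \tfrac{t^2}{2} h_2(\theta) + O(t^3)$; the constraint $\mbox{Area}(\Omega_t) \equiv \pi$ forces $\int_0^{2\pi} h_1 \, d\theta = 0$ and $\int_0^{2\pi} h_2 \, d\theta + \int_0^{2\pi} h_1^2 \, d\theta = 0$, and for $|t|$ small convexity is automatic.

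The minimizer attaining $I_{B_1}(A)$ is a circular arc $\gamma_0$ meeting $\partial B_1$ orthogonally; by the rotational symmetry of the disk, the full family of minimizers is $\{\gamma_\alpha := R_\alpha \gamma_0 : \alpha \in S^1\}$. For each $\alpha$ and small $t$, the implicit function theorem produces a smooth continuation $\gamma_\alpha(t) \subset \Omega_t$, namely a circular arc orthogonal to $\partial \Omega_t$ enclosing area $A$. Writing $\ell(\alpha, t) := \mbox{Length}(\gamma_\alpha(t))$, we have $I_{\Omega_t}(A) \le \min_\alpha \ell(\alpha, t)$ and $\ell(\alpha, 0) \equiv I_{B_1}(A)$. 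Expanding $\ell(\alpha, t) = I_{B_1}(A) + t\, a(\alpha) + \tfrac{t^2}{2} b(\alpha) + O(t^3)$, I compute $a$ and $b$ using the first- and second-variation formulas for arc length with endpoints constrained to a varying boundary, treating the area constraint via a Lagrange multiplier equal to the constant curvature of $\gamma_0$.

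By the rotational symmetry of $B_1$, $a(\alpha)$ has the form of a convolution $(h_1 \ast K)(\alpha)$ for a geometric kernel $K$ depending only on $\gamma_0$, and consequently $\int_0^{2\pi} a(\alpha)\, d\alpha = (\int h_1)(\int K) = 0$. If $a \not\equiv 0$, then $\min_\alpha a(\alpha) < 0$; choosing $\alpha_*$ attaining the minimum gives $I_{\Omega_t}(A) \le \ell(\alpha_*, t) < I_{B_1}(A)$ for small $t > 0$, and the side $t < 0$ is handled symmetrically using $\max_\alpha a > 0$. The degenerate case $a \equiv 0$ forces $h_1$ to lie in the Fourier kernel of $K$: mode zero is already excluded by area preservation, and mode one corresponds to an infinitesimal translation of the disk, under which $I_\Omega(A)$ is invariant. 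Any surviving modes, if $\hat K$ has additional zeros, must be handled at second order by a parallel convolution argument showing $\min_\alpha b(\alpha) < 0$, with the $h_2$ contribution controlled by the second-order area identity above.

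The main obstacle will be the explicit determination of the kernel $K$ (and the analogous second-variation kernel), together with the spectral verification that $a \equiv 0$, respectively $b \equiv 0$, only for geometrically trivial perturbations; the second-order analysis is more delicate because the $h_2$ cross-terms must be absorbed using the area constraint. The implicit-function construction of $\gamma_\alpha(t)$ uniformly in $\alpha$, and uniform control of the Taylor expansion as $A$ ranges over compact subintervals of $(0, \pi)$, are standard but must be verified.
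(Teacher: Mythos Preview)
Your overall plan coincides with the paper's: index the minimizers of $I_{B_1}(A)$ by a rotation parameter, continue them to competitor arcs in $\Omega_t$ enclosing the same area, expand their lengths to second order in $t$, and use rotational averaging to locate an arc whose length decreases. At first order the paper obtains exactly your $a(\alpha)$, written as
\[
l(u)=-\cot b\int_u^{u+2b}f_0(\tilde u)\,d\tilde u+f_0(u)+f_0(u+2b),
\]
with $f_0$ the normal speed of the boundary, observes $\int_0^{2\pi}l\,du=0$, and draws the same conclusion you do when $l\not\equiv 0$. The paper also confirms (in its Appendix) that the Fourier multiplier you call $\hat K$ genuinely has zeros beyond modes $0,\pm 1$ for infinitely many values of $A$, so the degenerate case $a\equiv 0$ cannot be dismissed as ``only translations'' and second order is truly needed.

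Where the paper is sharper than your outline is twofold. First, it does \emph{not} build perfect arcs in $\Omega_t$ via the implicit function theorem; it allows any family of curves with endpoints on $\partial\Omega_t$ enclosing area $A$, and in the degenerate case chooses the first-order variation of the arc to be purely tangential ($\eta_0\equiv 0$). This freedom, which your perfect-arc continuation forgoes, collapses the second-variation formula dramatically. Second, and this is the heart of the matter, the paper does \emph{not} pursue any ``spectral verification that $b\equiv 0$ only for geometrically trivial perturbations.'' Instead it computes directly that, with the above choice,
\[
\int_0^{2\pi}\frac{\partial^2 L}{\partial s^2}\Big|_{s=0}(u)\,du \;=\; -2\int_0^{2\pi}f_0^2(u)\,du \;<\;0
\]
for every nontrivial normal variation $f_0$, which immediately yields an $\alpha$ with $b(\alpha)<0$. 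Your proposed route of identifying a second convolution kernel and analysing its Fourier zeros is both harder and unnecessary, and if you insist on tracking the genuine perfect arcs the second-variation expression will carry extra terms (from the nonzero normal component $\eta_0$ forced by the implicit-function construction) that obscure this clean averaged identity.
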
 

The paper is organized as follows. In section 2, we set up the notations and present background materials. Theorem \ref{bi-symmetric intro} and Theorem \ref{perturbation short} are proved in Section 3 and 4 respectively. We include two appendices describing relevant results to our main theorems.

\section{Preliminaries}
All domains considered in this note are assumed to be bounded, connected and have smooth boundary $\pl\Omega$ whose (signed) curvature is denoted by $\kappa$.

We start with the following well-known description of the minimizers of the variational problem \eqref{varitional problem}. 
\begin{prop}
A least-perimeter curve enclosing a given area within a region consists of circular arcs or straight line segments meeting the boundary orthogonally. Moreover, if the region is convex, then a least-perimeter curve is connected. 
\end{prop}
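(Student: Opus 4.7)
The plan is to extract the geometric structure of a minimizer $\gamma$ from first-variation analysis and then exploit convexity for the connectedness claim.

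First I would consider a smooth one-parameter family $\{\gamma_t\}$ of curves inside $\Omega$ with $\gamma_0=\gamma$ and enclosed area held fixed at $A$. Writing the initial normal velocity as $\phi\nu$, where $\nu$ is a unit normal to $\gamma$, the area-preserving condition reads $\int_\gamma \phi\,ds = 0$ and the first variation of length is $-\int_\gamma \kappa_\gamma \phi\,ds$, where $\kappa_\gamma$ denotes the signed curvature. Length minimality then forces $\kappa_\gamma$ to be $L^2$-orthogonal to every mean-zero test function compactly supported in an interior arc of $\gamma$, so $\kappa_\gamma \equiv \lambda$ is a constant Lagrange multiplier. Planar curves of constant curvature are arcs of circles when $\lambda\neq 0$ and straight line segments when $\lambda=0$.

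For the orthogonality condition at a point $p \in \gamma \cap \pl\Omega$, I would admit variations whose endpoint slides along $\pl\Omega$ near $p$. The standard boundary term in the first variation formula contributes a multiple of $\cos\theta$, where $\theta$ is the angle between $\gamma$ and $\pl\Omega$ at $p$. Combining this sliding deformation with a compensating interior normal variation to restore the area constraint leaves the $\cos\theta$ contribution uncancelled, and so by minimality $\cos\theta = 0$; hence $\gamma$ meets $\pl\Omega$ orthogonally.

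Finally, for the connectedness statement in the convex case, suppose by contradiction that $\gamma$ decomposes into components $\gamma_1,\ldots,\gamma_k$ with $k\ge 2$. All $\gamma_i$ share the same curvature $\lambda$ and meet $\pl\Omega$ orthogonally. The strategy is to translate one component rigidly and continuously inside $\Omega$, a motion that is admissible over some nontrivial interval thanks to convexity of $\Omega$, until it first makes contact either with another component or with $\pl\Omega$ at a new point. The resulting configuration either produces a corner, which can be locally rounded off with a net strict decrease of length at fixed area, or forces a tangency that contradicts the orthogonality or constant-curvature structure established above. The main obstacle will be carrying out this sliding-and-smoothing argument rigorously in all configurations: one must simultaneously maintain the area constraint during the translation and verify that the local rounding at contact strictly decreases length. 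It is precisely at this step that the convexity of $\Omega$ plays an essential role, by guaranteeing that the space of admissible translates is rich enough to realize such a contact and that the modified competitor stays inside $\Omega$.
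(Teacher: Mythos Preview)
Your treatment of the first assertion is correct and is exactly what the paper invokes: the first variation of length under the area constraint forces constant curvature on each interior arc, and the boundary term forces orthogonal contact with $\partial\Omega$.

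For the connectedness assertion your approach diverges from the paper's, and as written it has a genuine gap. The paper does not argue geometrically; it cites Kuwert's observation that $I_\Omega^2$ is concave when $\Omega$ is convex. From this, connectedness is immediate: if a minimizer for area $A$ split into components enclosing areas $A_1,A_2$ with lengths $L_1,L_2$, then $I_\Omega(A)=L_1+L_2\ge I_\Omega(A_1)+I_\Omega(A_2)$, while concavity of $I_\Omega^2$ together with $I_\Omega(0)=0$ yields $I_\Omega(A_i)\ge\sqrt{A_i/A}\,I_\Omega(A)$ and hence $I_\Omega(A_1)+I_\Omega(A_2)>I_\Omega(A)$, a contradiction.

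Your proposed ``translate one component until contact, then round the corner'' argument is problematic at exactly the point you flag. A rigid translation of a component $\gamma_i$ moves its endpoints off $\partial\Omega$ and changes the area it encloses with $\partial\Omega$, so the translated curve is not an admissible competitor; convexity of $\Omega$ does not repair either defect. To make the idea work you would have to replace translation by a genuine one-parameter family of perfect arcs (endpoints on $\partial\Omega$, constant curvature, area compensated by a simultaneous deformation of another component), show such a family exists and reaches a contact, and then carry out the corner-smoothing with a strict length decrease at fixed area. None of these steps is supplied, and the last one in particular is delicate because the two arcs meeting at the contact have the \emph{same} curvature $\lambda$, so naive corner-rounding need not gain anything. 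The concavity argument bypasses all of this.
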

\begin{proof}
The first assertion is a consequence of the first variation of length. See Lemma 3.2 of \cite{AB} for example. The second assertion follows from an observation of Kuwert that $I_\Omega^2$ is concave for convex domains. See Section 2 of \cite{Berry}.
\end{proof}
\begin{definition}
A {\it perfect arc} in $\Omega$ is a circular arc or straight line segment (not necessary a minimizer of \eqref{varitional problem}) inside $\Omega$ that meets $\pl\Omega$ orthogonally.
\end{definition}
We have another consequence of the first variation formula.
\begin{prop}\label{dLdA}
Suppose $\gamma(t)$ is a family of perfect arcs with constant curvature $k(t)$. Let $L(t)$ and $A(t)$ denote the length and the enclosed area of $\gamma(t)$. Then
\[ \frac{dL}{dt} = k(t) \frac{dA}{dt}.\]
\end{prop}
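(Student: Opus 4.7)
The proof should follow from applying the standard first variation formulas for length and area to the family $\gamma(t)$, using two facts: the curvature is constant along each $\gamma(t)$ (so it pulls out of the length-variation integrand), and each $\gamma(t)$ is a perfect arc (so the boundary terms vanish). My plan is to set up a smooth parameterization $\gamma: [0,1]\times(-\veps,\veps) \rw \overline\Omega$ with $\gamma(0,t),\gamma(1,t)\in\pl\Omega$, write $V=\pl_t\gamma$ for the variation vector field, and let $T,N$ denote the unit tangent and unit inward normal to the enclosed region $R(t)$ along $\gamma(t)$ so that $\na_T T = k(t) N$.

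First I would recall that the standard first variation of length reads
\[
\frac{dL}{dt} = -\int_{\gamma(t)} k(t)\,\langle V,N\rangle\, ds + \bigl[\langle V,T\rangle\bigr]_{s=0}^{s=1}.
\]
Since $k(t)$ is independent of arc length, the integrand factors. The boundary terms vanish by the perfect arc hypothesis: at each endpoint the curve $s\mapsto\gamma(s,t)$ ends on $\pl\Omega$, so $V$ there is tangent to $\pl\Omega$; on the other hand orthogonality to $\pl\Omega$ forces $T$ to be normal to $\pl\Omega$, hence $\langle V,T\rangle=0$ at $s=0,1$. This gives $\frac{dL}{dt}=-k(t)\int_{\gamma(t)}\langle V,N\rangle\,ds$.

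Next I would compute $\frac{dA}{dt}$ for the region $R(t)$ bounded by $\gamma(t)$ together with an arc $\alpha(t)\subset\pl\Omega$. By Reynolds' transport theorem, $\frac{dA}{dt}$ equals the integral of the normal velocity of $\pl R(t)$ against the outward unit normal. Along $\alpha(t)$ the interior points are stationary (they sit on $\pl\Omega$, which is fixed), so only the $\gamma(t)$ part contributes, giving $\frac{dA}{dt}=-\int_{\gamma(t)}\langle V,N\rangle\,ds$ with the sign fixed by the convention that $N$ is the inward normal to $R(t)$. Comparing the two expressions yields $\frac{dL}{dt}=k(t)\frac{dA}{dt}$.

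The routine but easy-to-slip step is sign bookkeeping, matching the orientation used for the signed curvature with the choice of outward normal to $R(t)$; this is the only place where a wrong choice would change the answer. The substantive point is the vanishing of the endpoint terms, which is exactly the content of the perfect arc condition and is the mechanism that makes the identity local (depending only on $k(t)$ and not on the global geometry of $\pl\Omega$).
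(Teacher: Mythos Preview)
Your argument is correct and is exactly the fleshing-out of what the paper leaves implicit: the paper simply states that the proposition is ``another consequence of the first variation formula'' without writing the details, and your computation (pulling $k(t)$ out of the length variation, killing the boundary terms via the orthogonality in the perfect-arc condition, and matching against the area variation) is precisely that consequence. The only remark is that your phrase ``the interior points are stationary'' for the $\alpha(t)$ piece could be said more crisply---the relevant point is that any admissible velocity along $\alpha(t)\subset\partial\Omega$ is tangent to $\partial\Omega$, hence has zero normal component---but this does not affect the validity of the argument.
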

Next, we recall the fundamental Pestov-Ionin ineqaulity.
\begin{theorem}\label{Pestov-Ionin}
If $\gamma$ is a simple closed smooth curve, then \[ \kappa_{\max} \ge \sqrt{\pi/A},\]
where $\kappa_{\max}$ is the maximum curvature and $A$ is the enclosed area. The equality holds if and only if $\gamma$ is a circle.
\end{theorem}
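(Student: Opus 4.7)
My plan is to reduce the claim to the classical \emph{inscribed-disk form} of Pestov--Ionin: every simple closed $C^{2}$ curve in the plane with $\kappa\le K$ bounds a region containing a closed disk of radius $1/K$. Assuming this and applying it with $K=\kappa_{\max}$, the region $\Omega$ enclosed by $\gamma$ contains a closed disk $D$ of radius $R:=1/\kappa_{\max}$, so monotonicity of area gives
\[
A=|\Omega|\ge|D|=\pi R^{2}=\pi/\kappa_{\max}^{2},
\]
which is exactly $\kappa_{\max}\ge\sqrt{\pi/A}$. In the equality case $|D|=|\Omega|$ forces $\Omega=D$, so $\gamma=\partial D$ is a round circle.

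The inscribed-disk statement itself I would prove by analyzing the inward-parallel family $\gamma_{t}:=\{q-t\,\nu(q):q\in\gamma\}$, where $\nu$ is the outward unit normal. A direct computation gives the curvature of $\gamma_{t}$ at the image of $q$ as $\kappa(q)/(1-t\kappa(q))$, which stays finite for all $t<R$; hence no focal (cusp) singularity can form along the flow before time $R$. Equivalently, the inner-parallel sets $\Omega_{t}:=\{x\in\Omega:d(x,\gamma)\ge t\}$ deform smoothly and can only cease to be nonempty through a \emph{medial-axis collision}: two distinct points $q_{1},q_{2}\in\gamma$ whose inward normals meet at a common center $p$ at equal distance $t^{*}$. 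When this happens, the closed disk of radius $t^{*}$ around $p$ lies in $\overline{\Omega}$ and is internally tangent to $\gamma$ at both $q_{1}$ and $q_{2}$.

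The main obstacle, and the technical heart of Pestov--Ionin's original argument, is to rule out such a collision occurring at some $t^{*}<R$. The key input is a Jordan-curve analysis of one of the two subarcs of $\gamma$ joining $q_{1}$ to $q_{2}$: its endpoint tangents are prescribed by the normals through $p$, and the curvature bound $\kappa\le 1/R<1/t^{*}$ limits how rapidly the tangent can turn. Combining this turning estimate with simplicity of $\gamma$ shows that the subarc cannot close up the configuration without either self-intersecting or crossing into the open disk around $p$, contradicting its inclusion in $\Omega$. Therefore $t^{*}\ge R$, and any point on the inner parallel $\gamma_{R}$ is the center of an inscribed disk of radius $R$; the equality statement then follows from the area-filling observation above.
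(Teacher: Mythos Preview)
The paper does not prove this theorem; it merely states it and refers the reader to the original Pestov--Ionin paper, to Petrunin's lecture notes for an elementary account, and to Pankrashkin for a curve-shortening-flow argument. So there is no in-paper proof to compare against, and your proposal already goes further than the paper does.

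Your reduction to the inscribed-disk form of Pestov--Ionin is correct and standard: once one knows that the enclosed region contains a closed disk of radius $R=1/\kappa_{\max}$, the inequality $A\ge\pi R^{2}$ is immediate, and the equality case $|D|=|\Omega|\Rightarrow \Omega=\mathrm{int}\,D$ is handled exactly as you say. The outline you give for the inscribed-disk theorem itself is also the classical one (and essentially the route in Petrunin's notes that the paper cites): run the inner-parallel family, observe that no focal singularity can form before time $R$, and then argue that the inner-parallel set cannot collapse through a medial-axis collision at some $t^{*}<R$.

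Where your write-up is not yet a proof is precisely the step you yourself label ``the technical heart.'' The sentence ``the curvature bound $\kappa\le 1/R<1/t^{*}$ limits how rapidly the tangent can turn \ldots\ the subarc cannot close up the configuration without either self-intersecting or crossing into the open disk around $p$'' names what must be shown but does not show it. This step \emph{is} Pestov--Ionin: it requires a genuine comparison between the subarc of $\gamma$ from $q_{1}$ to $q_{2}$ and an arc of the circle $\partial D(p,t^{*})$---for instance via an arm/bow-type lemma or a careful support-line and turning-angle argument as in Petrunin---and it does not follow from the turning bound by inspection. If you want a self-contained proof, that comparison has to be supplied in full; otherwise it is cleaner, and in line with the paper, to quote the inscribed-disk theorem as a black box and keep only your first paragraph.
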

The original proof \cite{Pestov} is not easily accessible. See the recent lecture note \cite{Petrunin}	for an elementary account or \cite{Pankrashkin} for a proof using curve shortening flow. We will only use a simple implication of the inequality: If $\Omega$ is a domain with area $\pi$ and is not a unit disk, then the maximum curvature of $\pl\Omega$ is greater than 1 and, as a result of $\int_{\pl\Omega} \kappa ds = 2\pi$ and the isoperimetric inequality, the minimum curvature is less than 1.

An immediate consequence is that the conjecture is true for sufficiently small $A$ without the convexity assumption.
\begin{theorem}\label{small}
Let $\Omega$ be a domain with area $\pi$. Then there exists $\delta >0$ such that
\[ I_\Omega(A) < I_{B_1}(A) \]
for $0 < A < \delta$.
\end{theorem}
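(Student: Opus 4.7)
The statement is trivial when $\Omega = B_1$ (giving equality), so assume $\Omega \neq B_1$. By the Pestov--Ionin inequality (Theorem~\ref{Pestov-Ionin}), the hypothesis $\mbox{Area}(\Omega) = \pi$ forces $\kappa_{\max} > 1$. Fix $p \in \partial\Omega$ at which $\kappa$ attains this maximum $\kappa_0 > 1$. The plan is to construct, for each sufficiently small $A > 0$, an explicit test curve near $p$ that encloses area $A$ inside $\Omega$ with length strictly less than $I_{B_1}(A)$; this immediately upper-bounds $I_\Omega(A)$.

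First, set up coordinates so that $p = 0$, the tangent to $\partial\Omega$ at $p$ is the $x$-axis, and the interior of $\Omega$ lies locally above $\partial\Omega$. Since $p$ is a critical point of $\kappa$, Taylor expansion gives $\partial\Omega : y = \tfrac{\kappa_0}{2}x^2 + O(x^4)$ near $p$ (the cubic term drops out because $\kappa'(p) = 0$), so $\partial\Omega$ agrees to third order with its osculating circle $C_0$ of radius $R := 1/\kappa_0$ centered at $(0, R)$. For a small parameter $\sigma > 0$, take $\gamma_\sigma$ to be the portion inside $\Omega$ of the circle $S_\sigma$ of radius $\sigma$ centered on the $y$-axis at height $R - \sqrt{R^2 + \sigma^2}$; a direct check using the identity $(\sqrt{R^2+\sigma^2})^2 = R^2 + \sigma^2$ shows $S_\sigma$ meets $C_0$ orthogonally at the symmetric points $\bigl(\pm R\sigma/\sqrt{R^2+\sigma^2},\,R - R^2/\sqrt{R^2+\sigma^2}\bigr)$, so by the third-order agreement $\gamma_\sigma$ meets $\partial\Omega$ transversally at two points within $O(\sigma^4)$ of these and bounds a cap in $\Omega$ containing $p$.

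Exploiting the third-order agreement, a direct Taylor calculation (identical at the stated orders to the computation in the osculating disk $B_R$) yields the enclosed area and arc length
\begin{align*}
A(\sigma) &= \tfrac{\pi}{2}\sigma^2 - \tfrac{4\kappa_0}{3}\sigma^3 + O(\sigma^4), \\
L(\sigma) &= \pi\sigma - 2\kappa_0\sigma^2 + O(\sigma^3).
\end{align*}
Inverting the first expansion and substituting into the second gives
\[
L(A) = \sqrt{2\pi A} - \tfrac{4\kappa_0}{3\pi}\,A + O(A^{3/2}) \quad \text{as } A \to 0^+,
\]
and specializing the same derivation to $\Omega = B_1$ (so $\kappa_0 = 1$) produces $I_{B_1}(A) = \sqrt{2\pi A} - \tfrac{4}{3\pi}\,A + O(A^{3/2})$. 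Therefore
\[
I_\Omega(A) - I_{B_1}(A) \le L(A) - I_{B_1}(A) = -\tfrac{4(\kappa_0 - 1)}{3\pi}\,A + O(A^{3/2}),
\]
which is strictly negative for all $0 < A < \delta$ once $\delta$ is small enough.

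The main technical burden is establishing the expansions of $A(\sigma)$ and $L(\sigma)$ with sharp error control. The crucial input is that $\partial\Omega$ agrees with its osculating circle to order $x^4$ at a critical point of $\kappa$: this vanishing of the cubic deviation suppresses an $O(\sigma^3)$ perturbation error that would otherwise swamp the subleading $-\tfrac{4\kappa_0}{3}\sigma^3$ term in $A(\sigma)$ and destroy the comparison with $I_{B_1}(A)$.
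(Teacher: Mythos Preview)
Your argument is correct and follows the same route as the paper: invoke Pestov--Ionin to get $\kappa_{\max}>1$, then compare the small-area asymptotics $\sqrt{2\pi A}-\tfrac{4\kappa_{\max}}{3\pi}A+O(A^{3/2})$ for $\Omega$ against the corresponding expansion for $B_1$. The only difference is that the paper simply cites Proposition~2.1 of \cite{AB} for this asymptotic of $I_\Omega$, whereas you supply a self-contained derivation via the explicit test circles $S_\sigma$ near a point of maximal curvature; this is a bit more work but has the virtue of being self-contained, and since you only need the upper bound $I_\Omega(A)\le L(A)$ you avoid proving the matching lower bound that the full statement in \cite{AB} requires. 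One minor comment: your closing remark that the vanishing cubic deviation is ``crucial'' is slightly overstated---because the cubic term $cx^3$ is odd and your construction is symmetric about the normal line at $p$, its contributions to $A(\sigma)$ and $L(\sigma)$ largely cancel anyway---but since $\kappa$ does attain its maximum at a vertex, this does not affect the validity of your argument.
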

The assertion follows from Proposition 2.1 of \cite{AB}:
\[ \lim_{a \rw 0} \frac{I_\Omega(a) - \sqrt{2\pi a}}{a} = - \frac{4 \max_{\pl\Omega} \kappa}{3\pi}.\] 
See also Section 5 of \cite{Berry} for a proof that applies to convex regions based on the analysis of regular polygons. 

\section{Symmetric domains}
We fix a domain $\Omega \in \mathcal{A}$. We assume that the major axis of $\Omega$ lies on the $x$-axis and the minor axis  lies on the $y$-axis. Write $C$ for $\pl\Omega$ and denote the unit tangent and unit outward normal of $C$ by $T$ and $N$. Since $C$ is convex, we can parametrize it by the normal vector. Namely, $C = C(\theta)$ with $N = (\cos\theta, \sin\theta)$.

The following three lemmas were obtained in Section 4 of \cite{AB}. We present an elementary proof of them.
\begin{lem}\label{CdotT}
We have $C \cdot T <0$ when $0 < \theta < \pi/2$. In particular, the maximum radius of $\Omega$ are attained at the vertices on the $x$-axis and the minimum radius are attained at the vertices on the $y$-axis.
\end{lem}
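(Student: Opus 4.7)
The plan is to recast $C \cdot T < 0$ as a statement about the derivative of the support function, and then extract the sign via an ODE. With $N = (\cos\theta,\sin\theta)$ and $T = (-\sin\theta,\cos\theta)$, write $C = (x,y)$ and define $h(\theta) := C(\theta) \cdot N(\theta) = x\cos\theta + y\sin\theta$. A direct differentiation using $C'(\theta) = (1/\kappa)\,T$ yields $h'(\theta) = -x\sin\theta + y\cos\theta = C(\theta) \cdot T(\theta)$. The reflection symmetries of $\Omega$ across the two coordinate axes translate to $h(-\theta) = h(\theta) = h(\pi - \theta)$, so $h$ is even about both $\theta = 0$ and $\theta = \pi/2$; in particular $h'(0) = h'(\pi/2) = 0$. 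The lemma is thus reduced to proving $h' < 0$ on $(0,\pi/2)$.

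The main tool is the classical identity $\rho = h + h''$ for the radius of curvature $\rho = 1/\kappa$, whose derivative gives $(h')'' + h' = \rho'$. Hence $g := h'$ solves the linear Dirichlet problem
\[ g'' + g = \rho' \text{ on } (0,\pi/2), \qquad g(0) = g(\pi/2) = 0. \]
The smallest Dirichlet eigenvalue of $-\partial_\theta^2$ on $[0,\pi/2]$ equals $4$, so the operator $L = \partial_\theta^2 + 1$ is invertible on this interval. Its Green's function, assembled from the homogeneous solutions $\sin\theta$ and $\cos\theta$, is
\[ G(\theta,\phi) = \begin{cases} -\sin\theta\cos\phi, & 0 \leq \theta \leq \phi \leq \pi/2, \\ -\sin\phi\cos\theta, & 0 \leq \phi \leq \theta \leq \pi/2, \end{cases} \]
which is strictly negative on $(0,\pi/2)\times(0,\pi/2)$. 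Therefore $g = L^{-1}\rho'$ carries the sign opposite to $\rho'$ throughout the open first quadrant.

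To complete the argument I have to pin down the sign of $\rho'$ on $(0,\pi/2)$. The same symmetries force $\rho'$ to vanish at $\theta = 0,\pi/2,\pi,3\pi/2$, and the four-vertex hypothesis says that these are the only zeros of $\kappa'$ (equivalently of $\rho'$), so $\rho'$ is of constant sign in the open first quadrant. The assumption that the major axis lies on the $x$-axis gives $h(0) > h(\pi/2)$; if instead $\rho' < 0$ on $(0,\pi/2)$, the Green's-function representation would yield $g = h' > 0$ there, forcing $h$ to be strictly increasing and hence $h(0) < h(\pi/2)$, a contradiction. Therefore $\rho' > 0$ and $h' < 0$ on $(0,\pi/2)$, which is $C \cdot T < 0$. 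The ``in particular'' clause follows from $(|C|^2)' = 2\rho\, h' < 0$ on $(0,\pi/2)$: $|C(\theta)|$ strictly decreases from its value at $\theta = 0$ to its value at $\theta = \pi/2$, and the full statement for all four vertices follows by the bi-axial symmetry.

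The step that requires the most care is the sign-propagation argument for $L^{-1}$: because $L = \partial_\theta^2 + 1$ has a zeroth-order term with the ``wrong'' sign, it is not a standard maximum-principle operator, so one cannot argue pointwise but must really produce the kernel explicitly (or equivalently invoke positivity of the first Dirichlet eigenvalue of $-L$). The remainder is symmetry bookkeeping together with the identity $\rho = h + h''$.
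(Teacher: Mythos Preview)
Your proof is correct and follows essentially the same route as the paper: both reduce the claim to the ODE $g''+g=\rho'$ for $g=h'=C\cdot T$ with Dirichlet data $g(0)=g(\pi/2)=0$, and both read off the sign of $g$ from that of $\rho'$. The paper's auxiliary functions $P=\Psi'\cos\theta-\Psi''\sin\theta$ and $Q=\Psi'\sin\theta+\Psi''\cos\theta$ are exactly the variation-of-parameters coefficients whose integral form is your Green's kernel $G(\theta,\phi)$, so the two arguments are the same computation in different packaging; the one small addition in your version is the contradiction argument fixing the sign of $\rho'$, which the paper simply absorbs into the convention that the major axis lies on the $x$-axis.
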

\begin{proof}
Denote $\frac{df}{d\theta} = f'$ in the proof. Direct computation yields $(C \cdot N)' = C \cdot T$ and $(C \cdot T)' = \frac{1}{\kappa}- C \cdot N $. Write $\Psi = -C \cdot N$ and we get $\Psi' + \Psi''' = \frac{\kappa'}{\kappa^2} < 0$ on $(0,\frac{\pi}{2})$.

Let 
\begin{align*}
P &= \Psi' \cos\theta - \Psi'' \sin\theta\\
Q &= \Psi' \sin\theta + \Psi'' \cos\theta.
\end{align*}
Then $P(0) = Q(\frac{\pi}{2})=0$. Moreover, we have $P' = -\sin\theta (\Psi' + \Psi''') > 0$ and $Q' = \cos\theta (\Psi' + \Psi''') <0$ on $(0,\frac{\pi}{2})$. It follows that $P,Q,$ and $\Psi' = - C \cdot T = P \cos\theta + Q\sin\theta$ are all positive on $(0,\frac{\pi}{2})$. 
\end{proof}

\begin{lem}
The perfect arc that is symmetric with respect to the $x$-axis is contained inside $\Omega$.
\end{lem}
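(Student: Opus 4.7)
The plan is to establish $\gamma\subset\bar\Omega$ by verifying the support-function inequality $P\cdot N(\theta)\le h(\theta)$ for every direction $\theta$ and every $P\in\gamma$, where $h(\theta):=C(\theta)\cdot N(\theta)$. First I would pin down the arc concretely. Orthogonality of $\gamma$ to $\partial\Omega$ at $C(\pm\theta_0)$ forces the center $(c,0)$ of the underlying circle to lie on the tangent line to $\partial\Omega$ at $C(\theta_0)$, and intersecting with the $x$-axis yields
\[ c=x(\theta_0)+y(\theta_0)\tan\theta_0,\qquad r=y(\theta_0)/\cos\theta_0.\]
Writing $x_0:=x(\theta_0)$, $y_0:=y(\theta_0)$, the arc is then the graph $P(y)=(c-\sqrt{r^2-y^2},\,y)$ for $y\in[-y_0,y_0]$. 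The limit $\theta_0=\pi/2$ degenerates to the $y$-axis chord, which lies in $\bar\Omega$ by convexity, so I may assume $\theta_0\in(0,\pi/2)$ throughout.

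Fixing $\theta$ and setting $f(y;\theta):=P(y)\cdot N(\theta)$, a direct calculation gives $\partial_y^2 f=r^2\cos\theta/(r^2-y^2)^{3/2}$, so $f(\cdot;\theta)$ is convex when $\cos\theta\ge 0$ and concave when $\cos\theta<0$. Whenever the maximum of $f$ over $[-y_0,y_0]$ is attained at an endpoint, it equals $C(\pm\theta_0)\cdot N(\theta)\le h(\theta)$ automatically, since $C(\pm\theta_0)\in\partial\Omega$. The only subtle case is concave $f$: its unique critical point $y^{*}=r\sin\theta$ lies in $[-y_0,y_0]$ precisely when $\theta\in[\pi/2+\theta_0,\,3\pi/2-\theta_0]$, and on this sub-interval the maximum is an interior one, equal to
\[ c\cos\theta+r=:h_D(\theta),\]
the support function of the disk enclosed by the circle containing $\gamma$.

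The heart of the proof is then the inequality $h_D(\theta)\le h(\theta)$ on $[\pi/2+\theta_0,\,3\pi/2-\theta_0]$, and this is where Lemma~\ref{CdotT} enters. On this interval the only critical point of $h_D(\theta)=c\cos\theta+r$ is $\theta=\pi$ (a local minimum), so $h_D$ attains its maximum at the endpoints, and a short manipulation yields
\[ h_D(\pi/2+\theta_0)=-c\sin\theta_0+r=-x_0\sin\theta_0+y_0\cos\theta_0=C(\theta_0)\cdot T(\theta_0),\]
which is strictly negative by Lemma~\ref{CdotT}. Since $\Omega$ contains the origin (by double symmetry), $h(\theta)>0$, and hence $h_D<0<h$ on the relevant interval. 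This completes the estimate and gives $\gamma\subset\bar\Omega$.

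The main obstacle I anticipate is recognizing the correct case split via the sign of $\cos\theta$ and spotting the algebraic identity $h_D(\pi/2+\theta_0)=C(\theta_0)\cdot T(\theta_0)$; once that identity is in hand, Lemma~\ref{CdotT} immediately delivers the needed sign, and the rest is bookkeeping.
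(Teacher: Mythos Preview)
Your argument is correct, but it takes a genuinely different route from the paper's.  The paper's proof is a two-line geometric observation: the normal lines to $\partial\Omega$ at the two endpoints $C_1=C(\theta_0)$ and $C_2=C(-\theta_0)$ meet at a point $p=(-C(\theta_0)\cdot T(\theta_0)/\sin\theta_0,\,0)$ on the $x$-axis; Lemma~\ref{CdotT} gives $p_x>0$, so $p\in\Omega$.  Since the perfect arc is tangent to the segments $pC_1$ and $pC_2$ at $C_1$, $C_2$ and curves toward $p$, it lies in the triangle $pC_1C_2\subset\bar\Omega$ by convexity.  You instead run a full support-function comparison: after a convexity/concavity case split on $\cos\theta$, the only nontrivial directions are $\theta\in[\pi/2+\theta_0,\,3\pi/2-\theta_0]$, where the maximum of $P\cdot N(\theta)$ over the arc equals $h_D(\theta)=c\cos\theta+r$, and you finish with the neat identity $h_D(\pi/2+\theta_0)=C(\theta_0)\cdot T(\theta_0)$ together with Lemma~\ref{CdotT}.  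Both proofs ultimately rest on the same sign $C\cdot T<0$; the paper packages it as ``$p\in\Omega$'' while you package it as ``$h_D<0$''.  The triangle argument is shorter and more visual; your support-function approach is more computational but has the merit of being entirely analytic and of making explicit exactly which directions are the dangerous ones.
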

\begin{proof}
Suppose the end points of the perfect arc are $C_1 = C(\theta)$ and $C_2$. Then the normal lines at $C_1$ and $C_2$ intersect at $p = (- \frac{C(\theta)\cdot T(\theta)}{\sin\theta},0).$ Since $p \in \Omega$ and the perfect arc is contained in the triangle $pC_1C_2$, the perfect arc is contained in $\Omega$ by the convexity.  
\end{proof}

We parametrize the family of perfect arcs that are symmetric with respect to the $x$-axis by $\theta$ and denote their length by $L(\theta)$ and enclosed area by $A(\theta)$. 
\begin{lem}
Both $L(\theta)$ and $A(\theta)$ are strictly increasing functions in $\theta$.
\end{lem}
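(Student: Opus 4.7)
The plan is to prove that $A(\theta)$ is strictly increasing; the monotonicity of $L(\theta)$ will then follow from Proposition \ref{dLdA} since the arc curvature $k = 1/r$ is positive.

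First I would encode the geometry of $\gamma_\theta$ via the support function $h(\theta) = C(\theta)\cdot N(\theta)$. Orthogonality and the $x$-axis symmetry place the arc's centre on the $x$-axis, and a short computation identifies $\gamma_\theta$ as an arc of the circle centred at $(P(\theta), 0)$ of radius $r(\theta)$, with
\[ P(\theta) = h(\theta)\sec\theta, \qquad r(\theta) = h(\theta)\tan\theta + h'(\theta). \]
Differentiating and using $h + h'' = 1/\kappa$ yields $P'(\theta) = r\sec\theta$ and $r'(\theta) = r\tan\theta + 1/\kappa$.

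Next I would compute $A'(\theta)$ by horizontal slicing. Writing $Y(\theta) = r\cos\theta$ for the $y$-coordinate of $C(\theta)$, letting $X_\Omega(y)$ denote the right boundary of $\Omega$ at height $y$, and $X_\gamma(y) = P - \sqrt{r^2 - y^2}$ the corresponding value on $\gamma_\theta$, one has $A(\theta) = 2\int_0^{Y}[X_\Omega(y) - X_\gamma(y)]\,dy$. The boundary term at $y = Y$ vanishes because $C(\theta)$ lies on both $\partial\Omega$ and $\gamma_\theta$ (so $X_\Omega(Y) = X(\theta) = X_\gamma(Y)$), and a direct calculation gives
\[ A'(\theta) = 2\int_0^{Y}\left(\frac{rr'}{\sqrt{r^2 - y^2}} - P'\right)dy. \]
The integrand is monotonically increasing in $y$ on $[0, Y]$, so it is bounded below by its value $r' - P'$ at $y = 0$. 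Hence $A'(\theta) > 0$ will follow from the pointwise inequality $r' > P'$ on $(0, \pi/2)$.

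The main obstacle is establishing $r' > P'$. A brief manipulation using $r = Y/\cos\theta$ and division by $1 - \sin\theta > 0$ shows this inequality is equivalent to
\[ Y(\theta)\,\kappa(\theta) < 1 + \sin\theta, \qquad \theta \in (0, \pi/2). \]
I would prove this by setting $F(\theta) = 1 + \sin\theta - Y\kappa$ and noting $F(0) = 1$. Using $Y'(\theta) = (h+h'')\cos\theta = \cos\theta/\kappa$, so that $Y'\kappa$ cancels $\cos\theta$, one computes $F'(\theta) = -Y\kappa'$. The argument in the proof of Lemma \ref{CdotT} supplies $\kappa' < 0$ on $(0, \pi/2)$, while $Y > 0$ there follows from $Y(0) = 0$ together with $Y' = \cos\theta/\kappa > 0$. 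Hence $F' > 0$, so $F > F(0) = 1 > 0$, which is the desired inequality; combining with the previous steps gives $A'(\theta) > 0$ and closes the argument.
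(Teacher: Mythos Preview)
Your proof is correct, but it takes a genuinely different route from the paper's. The paper proves $L'(\theta)>0$ first and then obtains $A'(\theta)>0$ from Proposition~\ref{dLdA}; you do the reverse. Concretely, the paper writes $L(\theta)=\frac{\pi-2\theta}{\cos\theta}\int_0^\theta\frac{\cos\omega}{\kappa(\omega)}\,d\omega$, differentiates, and then uses the monotonicity of $\kappa$ to replace $\kappa(\omega)$ by a constant in the integral, reducing to the elementary inequality $(\pi-2\theta)>\sin 2\theta$. Your argument instead computes $A'(\theta)$ by horizontal slicing and bounds the integrand from below by its value at $y=0$, reducing to the pointwise estimate $Y(\theta)\kappa(\theta)<1+\sin\theta$, which you dispatch with the neat observation that $F(\theta)=1+\sin\theta-Y\kappa$ satisfies $F(0)=1$ and $F'=-Y\kappa'>0$. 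Both arguments ultimately hinge on the same structural fact, $\kappa'<0$ on $(0,\pi/2)$; the paper's is a shade more direct, while yours isolates a clean auxiliary inequality and makes the geometric picture (the moving arc sweeping out area) more explicit.

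One small wording issue: you say the proof of Lemma~\ref{CdotT} ``supplies'' $\kappa'<0$, but that proof \emph{uses} $\kappa'<0$ as input; the inequality itself is a direct consequence of the four-vertex hypothesis on $\mathcal{A}$ together with the placement of the major axis. This does not affect the validity of your argument.
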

This is not obvious as the perfect arcs may cross each other.

\begin{figure}[h]
\caption{Intersection of perfect arcs. Readers should smooth out the corners.}
\centering
\includegraphics[scale=0.4]{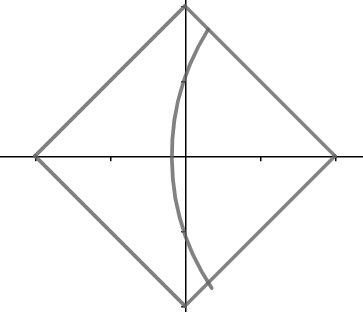}
\end{figure}
\begin{proof}

Suppose $C(\theta) = (x(\theta),y(\theta))$. Then we have $y(\theta) = \int_0^\theta \frac{\cos\omega}{\kappa(\omega)}\,d\omega$ and $L(\theta) = \frac{\pi - 2\theta}{\cos\theta} \int_0^\theta \frac{\cos\omega}{\kappa(\omega)}\,d\omega$. Since $\kappa$ is decreasing on $(0,\frac{\pi}{2})$, the derivative of $L$ on $(0,\frac{\pi}{2})$ satisfies
\begin{align*}
\frac{dL}{d\theta} &= \frac{(\pi - 2\theta)\sin\theta - 2\cos\theta}{\cos^2\theta} \int_0^\theta \frac{\cos\omega}{\kappa(\omega)}\,d\omega + \frac{\pi - 2\theta}{\cos\theta} \frac{\cos\theta}{\kappa(\theta)}\\
&\ge \frac{(\pi - 2\theta)\sin\theta - 2\cos\theta}{\cos^2\theta} \int_0^\theta \frac{\cos\omega}{\kappa(0)} d\omega + \frac{\pi - 2\theta}{\kappa(0)}\\
&= \frac{(\pi - 2\theta) - \sin 2\theta}{\kappa(0)\cos^2\theta} > 0.
\end{align*}

By Proposition \ref{dLdA}, we also get $\frac{dA}{d\theta} >0$.

\end{proof}

Now we further impose that $\Omega$ has area $\pi$ and is not a unit disk. We denote the length of perfect arcs in the unit disk by $L^*(\theta)$.
\begin{lem}\label{Lcomparison}
If $0<\theta \le \theta^* \le \pi/2$, then $L(\theta) < L^*(\theta^*)$.
\end{lem}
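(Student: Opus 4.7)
The plan is to reduce the inequality to the pointwise comparison $L(\theta) < L^*(\theta)$ on $(0,\pi/2]$, and then invoke monotonicity of $L^*$: for $\theta \le \theta^*$ we obtain $L(\theta) < L^*(\theta) \le L^*(\theta^*)$. Monotonicity of $L^*(\theta) = (\pi - 2\theta)\tan\theta$ follows by direct differentiation, since $\frac{dL^*}{d\theta} = \frac{(\pi - 2\theta) - \sin 2\theta}{\cos^2\theta}$ and the elementary function $\pi - u - \sin u$ is strictly decreasing on $[0,\pi]$ with value $0$ at $u = \pi$, so it stays positive on $[0,\pi)$.

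For the pointwise comparison, the formulas preceding the statement give $L(\theta) < L^*(\theta) \iff y(\theta) < \sin\theta$. I set $\phi(\theta) := \sin\theta - y(\theta)$, noting $\phi(0) = 0$ and $\phi'(\theta) = \cos\theta\,(1 - 1/\kappa(\theta))$. Since $\Omega \neq B_1$, the consequences of Pestov--Ionin recorded after Theorem~\ref{Pestov-Ionin} give $\kappa_{\max} > 1 > \kappa_{\min}$, and combined with $\kappa$ strictly decreasing on $(0,\pi/2)$ (from the proof of Lemma~\ref{CdotT}) there is a unique $\omega_0 \in (0,\pi/2)$ with $\kappa(\omega_0) = 1$. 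Thus $\phi$ rises strictly from $\phi(0) = 0$ on $[0,\omega_0]$ to a positive maximum at $\omega_0$ and then decreases monotonically on $[\omega_0,\pi/2]$ to $\phi(\pi/2) = 1 - b$, where $b := y(\pi/2)$ is the $y$-coordinate of the top vertex. Hence $\phi > 0$ on $(0,\pi/2]$ as soon as $b < 1$.

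The main obstacle is therefore to establish $b < 1$, and this I would extract from the geometry already used in Lemma~\ref{CdotT}. That proof records $(C \cdot N)' = C \cdot T < 0$ on $(0,\pi/2)$; combining with $(C \cdot T)' = 1/\kappa - C \cdot N$, differentiation of $|C|^2 = (C \cdot N)^2 + (C \cdot T)^2$ gives $\tfrac{d}{d\theta}|C|^2 = 2(C \cdot T)/\kappa < 0$ on $(0,\pi/2)$. Hence $|C(\theta)|$ decreases from $a$ to $b$ on $[0,\pi/2]$, and by the symmetries of $\Omega$ it follows that $|C| \ge b$ on all of $\partial\Omega$. Since $\Omega$ is convex and contains the origin, the closed disk $\bar B_b$ must lie inside $\Omega$: any $q \in B_b \setminus \Omega$ would force the segment from $0$ to $q$ to exit $\Omega$ at a boundary point $p$ with $|p| < |q| \le b$, contradicting $|C| \ge b$. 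Comparing areas gives $\pi b^2 \le \pi$, so $b \le 1$; equality would force $\bar B_b = \Omega = B_1$, contrary to hypothesis, so $b < 1$ strictly. Combining with the previous paragraph completes the proof.
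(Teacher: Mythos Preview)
Your proof is correct and follows essentially the same route as the paper's: both reduce to the pointwise inequality $y(\theta)<y^*(\theta)=\sin\theta$, analyze the sign change of its derivative at the unique $\bar\theta$ with $\kappa(\bar\theta)=1$, and settle the endpoint value $y(\pi/2)<1$ via the area constraint and Lemma~\ref{CdotT}. The only cosmetic difference is that the paper reduces using the monotonicity of $L$ (already proved) and argues the endpoint by contradiction, whereas you reduce using the monotonicity of $L^*$ and prove $b<1$ directly; the substance is identical.
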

\begin{proof}
Since $L$ is increasing in $\theta$, it suffices to show that $L(\theta^*) < L^*(\theta^*)$. Moreover, it suffices to show that $y(\theta^*) < y^*(\theta^*)$ where $(x^*(\theta), y^*(\theta))$ is the upper endpoint of the perfect arc in the unit disk.

Let $s$ be the arclength parameter. We have $\frac{dy}{ds} = \cos\theta, \frac{d\theta}{ds} = \kappa$ and hence the relation $y(\theta) - y^*(\theta) = \int_0^{\theta} \cos\omega (\frac{1}{\kappa(\omega)} -1 )\,d\omega$, which implies that $y - y^*$ is decreasing on $(0,\bar\theta)$ and increasing on $(\bar\theta, \pi/2]$, where $\kappa(\bar\theta)=1$. 

Suppose $y(\theta^*) \ge y^*(\theta^*)$ for some $\theta^*$. Then $\theta^* \in (\bar\theta, \pi/2]$ and we infer that $y(\frac{\pi}{2}) \ge y^*(\frac{\pi}{2})$. Therefore the minor axis of $\Omega$ is longer than 2 and $\Omega$ contains a unit disk by Lemma \ref{CdotT}. This contradicts the assumption that $\Omega$ has area $\pi$.  
\end{proof}

\begin{figure}[h]
\caption{Illustration for Lemma \ref{Lcomparison}}\centering
\includegraphics[scale=0.4]{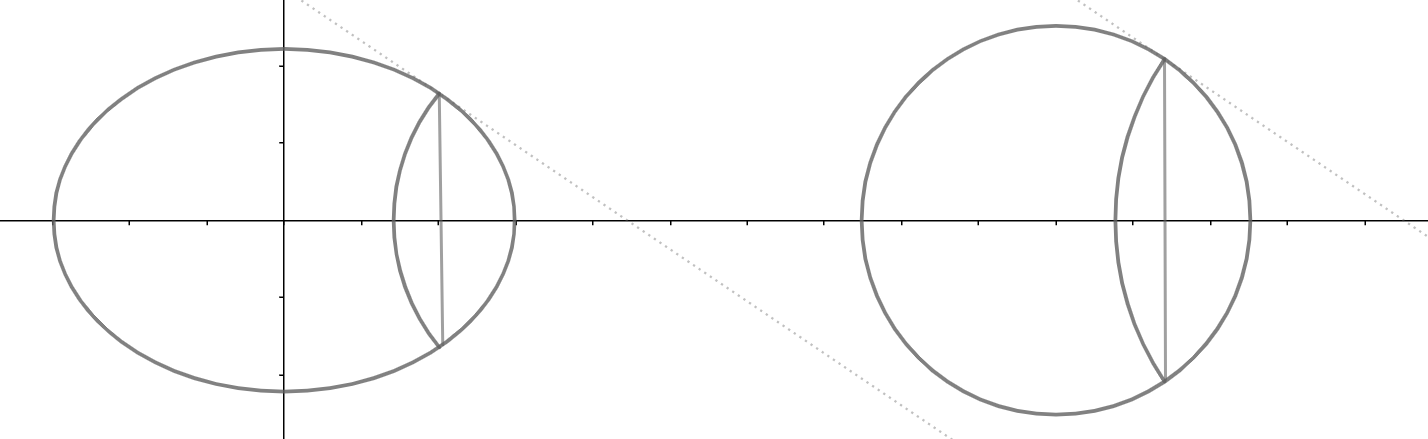}
\end{figure}

We are ready to prove the main theorem of this section.\begin{theorem}\label{bi-symmetric}
Let $\mathcal{A}$ be the class of domains $\Omega$ bounded by smooth convex curves that are symmetric in both coordinate axes and have exactly four vertices. Suppose that $\Omega \in \mathcal{A}$ has area $\pi$ and is not a unit disk. Then
\[ I_\Omega(A) < I_{B_1}(A) \]
for $0<A<\pi$.
\end{theorem}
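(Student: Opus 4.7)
The plan is to use the $x$-symmetric perfect arc in $\Omega$ as a competitor. By the symmetry $I_\Omega(A) = I_\Omega(\pi-A)$ and its analogue in $B_1$, it suffices to treat $A \in (0, \pi/2]$. The map $A \colon [0, \pi/2] \to [0, \pi/2]$ is a continuous strictly increasing bijection (the endpoint $A(\pi/2) = \pi/2$ comes from the fact that at $\theta = \pi/2$ the perfect arc degenerates to the minor-axis chord, which bisects $\Omega$), so there exists a unique $\theta_A$ with $A(\theta_A) = A$. Define $\theta^*_A$ analogously in $B_1$, so that $I_{B_1}(A) = L^*(\theta^*_A)$. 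Since the $x$-symmetric perfect arc at $\theta_A$ is a valid competitor, $I_\Omega(A) \le L(\theta_A)$, and the theorem reduces to the strict inequality $L(\theta_A) < L^*(\theta^*_A)$.

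I would carry out this comparison along the area parameter, using Proposition \ref{dLdA}. Setting $\hat{L}(A) = L(\theta_A)$ and $\hat{L}^*(A) = L^*(\theta^*_A)$, the identity $dL/dA = k$ along each family gives $\hat{L}'(A) = k(A) = \cos\theta_A / y(\theta_A)$ for $\Omega$ and $\hat{L}^{*\prime}(A) = k^*(A) = \cot\theta^*_A$ for $B_1$, so
\[
\hat{L}^*(A) - \hat{L}(A) = \int_0^A \bigl(k^*(s) - k(s)\bigr)\, ds.
\]
Hence it suffices to prove the curvature comparison $k(A) \le k^*(A)$ on $(0, \pi/2]$ with strict inequality somewhere; equivalently $\tan\theta^*_A \le y(\theta_A)/\cos\theta_A$, i.e.\ the perfect arc enclosing area $A$ has strictly larger radius in $\Omega$ than in $B_1$. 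I would extract this from the ingredients $y(\theta) < \sin\theta$ (established in the proof of Lemma \ref{Lcomparison}), the matching identity $A(\theta_A) = A^*(\theta^*_A)$, and the monotonicity of $\kappa$ on $(0, \pi/2)$.

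I expect the main obstacle to be precisely this radius/curvature comparison. The naive attempt to apply Lemma \ref{Lcomparison} by showing $\theta_A \le \theta^*_A$ does not go through in general: an asymptotic computation near $\theta = 0$ yields $A(\theta)/A^*(\theta) \to 1/\kappa(0)^2 < 1$, so $\theta_A > \theta^*_A$ for small $A$ whenever $\kappa(0) > 1$. My backup plan is to patch together three regimes. Near $A = 0$, Theorem \ref{small} together with $\kappa_{\max} > 1$ from Pestov--Ionin gives $\hat{L}^*(A) - \hat{L}(A) \sim 4(\kappa(0)-1) A / (3\pi) > 0$. At the other endpoint, the identity $L(\pi/2) = 2 y(\pi/2) < 2 = L^*(\pi/2)$ yields the inequality directly, and extends to a neighborhood of $\pi/2$ by continuity. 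In the intermediate range, I would propagate the strict inequality by a sign-change analysis of $\hat{L}^* - \hat{L}$ modeled on the $y - \sin$ argument used to prove Lemma \ref{Lcomparison}, exploiting the single sign change of $\kappa(\theta) - 1$ on $(0, \pi/2)$ to show that the integrand $k^* - k$ can change sign at most once.
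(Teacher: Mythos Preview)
Your setup is correct and matches the paper: reduce to $A\in(0,\pi/2]$, use the $x$-symmetric perfect arcs as competitors, and compare $L(A)$ with $L^*(A)$ via $dL/dA=k$. The difficulty you identify is also real: since $\theta_A>\theta^*_A$ for small $A$, Lemma~\ref{Lcomparison} does not apply on the nose, and you are left needing either the pointwise inequality $k(A)\le k^*(A)$ or a structural fact about the sign pattern of $k^*-k$. Neither is established in your proposal. The sentence ``exploiting the single sign change of $\kappa(\theta)-1$ to show that $k^*-k$ can change sign at most once'' is the crux, and it is not justified: $\kappa$ is the boundary curvature as a function of the normal angle $\theta$, while $k$ and $k^*$ are arc curvatures compared at equal \emph{enclosed area}, through two different reparametrizations $\theta_A$ and $\theta^*_A$. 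There is no monotone map carrying the sign change of $\kappa-1$ to a sign change of $k^*-k$, and I do not see how to make this step go through without essentially reproving the theorem.

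The paper sidesteps the whole curvature comparison by a short device you are missing: look at the \emph{ratio} $L/L^*$ rather than the difference, and use the algebraic identity $Lk=\pi-2\theta$ (the $x$-symmetric perfect arc at parameter $\theta$ subtends central angle $\pi-2\theta$), which holds in both $\Omega$ and $B_1$. At an interior maximum $\bar A\in(0,\pi/2]$ of $L/L^*$ one has $(L/L^*)'=0$, i.e.\ $kL^*=k^*L$; combining with $L=(\pi-2\theta)/k$ and $L^*=(\pi-2\theta^*)/k^*$ gives
\[
\frac{\pi-2\theta}{\pi-2\theta^*}=\Bigl(\frac{L}{L^*}\Bigr)^2.
\]
Now the dichotomy is immediate: if $\theta>\theta^*$ the left side is $<1$, hence $L<L^*$; if $\theta\le\theta^*$ then Lemma~\ref{Lcomparison} applies and again $L<L^*$. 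Either way $\sup L/L^*<1$. (The boundary case $\bar A\to 0$ is handled by Theorem~\ref{small}, as you already note.) The identity $Lk=\pi-2\theta$ is what converts the awkward $A$-parametrized comparison into a clean comparison of the normal angles, and it is the one ingredient your argument lacks.
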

\begin{proof}
Since $A(\theta)$ is strictly increasing on $(0,\frac{\pi}{2})$, we change variable to consider $L$ as a function of $A$. Since $I_\Omega(A) \le L(A)$ and $I_{B_1}(A) = L^*(A)$, it suffices to show that supremum of the function $\frac{L}{L^*}(A)$ on $A \in (0,\pi)$, which is symmetric with respect to $\pi/2$, is less than 1.  If the supremum occurs at $A =0$, then the assertion follows from Theorem \ref{small} so we assume the absolute maximum occurs at $\bar A \in (0, \frac{\pi}{2}]$. We have
\begin{align*}
0 = \lt( \frac{L}{L^*}\rt)'(\bar A) = \frac{k L^* - k^* L}{L^{*2}}(\bar A)
\end{align*}
Since $L = \frac{\pi - 2\theta}{k}$, we obtain
\[ \frac{\pi - 2\theta}{\pi - 2\theta^*} = \lt( \frac{L}{L^*} \rt)^2\]
at $\bar A$. Either $\theta > \theta^*$ or $\theta \le \theta^*$ leads to $L < L^*$, where Lemma \ref{Lcomparison} is used in the latter case.  
\end{proof}
 
\section{Perturbation of the unit disk}
\subsection{Prefect arcs in the unit disk}
We begin this section by describing the perfect arcs in the unit disk, following Section 2 of \cite{AB}. The isoperimetric profile of $B_1$ is given implicitly by
\[ I_{B_1}(a) = (\pi - 2\theta) \tan\theta, \quad a = \theta - \tan\theta + (\frac{\pi}{2} - \theta)\tan^2\theta. \]
The perfect arc $\tilde\sigma$ is given by
\begin{align*}
\tilde\sigma(x) = (\sec\theta,0) + \tan\theta \lt( \cos \lt( (\pi-2\theta)x + \frac{\pi}{2} + \theta\rt), \sin \lt( (\pi-2\theta)x + \frac{\pi}{2} + \theta \rt) \rt), \quad 0 \le x \le 1.
\end{align*} 
The other perfect arcs $\tilde \sigma(x;u)$ are obtained by rotating $\tilde\sigma$ counterclockwise by an angle of $u$.

\begin{figure}[h]
\caption{Perfect arcs of the unit disk.}
\centering
\includegraphics[scale=0.5]{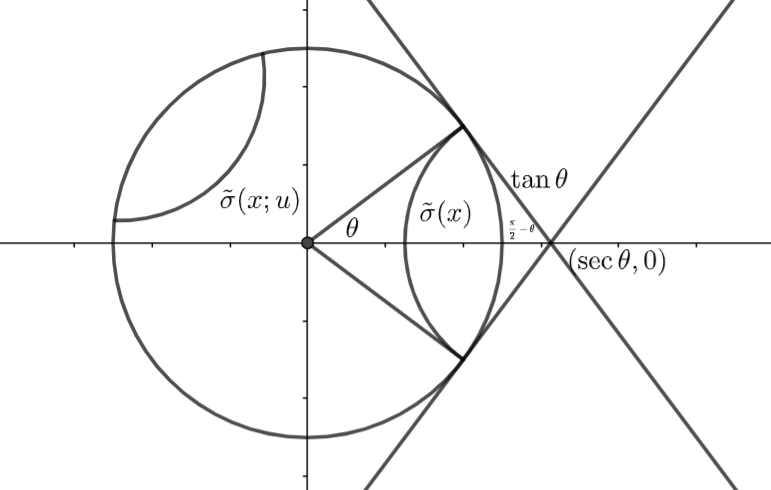}
\end{figure}

\subsection{Setup of the perturbative analysis}
We follow closely the convention in \cite{AB}. See their Figure 2 for an instructive summary. Let $X(u,s): [0,2\pi] \times [0,\delta) \rw \R^2$ be a perturbation of the unit circle that preserves area. Namely, denoting the domain enclosed by $X(\cdot,s)$ by $\Omega_s$, $X(u,s)$ satisfies $X(u,0) = (\cos u, \sin u)$ and $\mbox{Area}(\Omega_s) = \pi$.

We consider a family of curves $\sigma(x,s;u): [0,1] \times [ 0, \delta) \times [0,2\pi]$ inside $\Omega_s$ satisfying
\begin{enumerate}
\item $\sigma(x,0;u) = \tilde\sigma(x;u)$ is the perfect arc of $B_1$.
\item The endpoints of $\sigma(\cdot, s;u)$ lie on $\pl\Omega_s$:
\begin{align}\label{endpoints}
\sigma(0,s;u) = X(u_+(s),s), \quad \sigma(1,s;u) = X(u_-(s),s)
\end{align}
\item $\sigma(\cdot,s)$ encloses area $a$ together with $\pl\Omega_s$.
\end{enumerate}

Let
\begin{align*}
\frac{\pl X}{\pl s} = f \mbf N + g \mbf T
\end{align*}
be the variational field of $X(u,s)$
and
\begin{align*}
\frac{\pl\sigma}{\pl s} = \eta \mbf n + \xi \mbf t
\end{align*}
be the variational field of the arcs. To simplify notation, we write
\begin{align*}
\eta_0(x) = \eta(x,0), f_0(u) = f(u,0)
\end{align*}
and $\xi_0(x) = \xi(x,0), g_0(u) = g(u,0)$.

\subsection{First variations}
Since $\mathbf{t}(0) = - \mathbf{N}(u_+), \mathbf{t}(1) = \mathbf{N}(u_-), \mathbf{n}(0) = \mathbf{T}(u_+), \mathbf{n}(1) = \mathbf{T}(u_-)$, differentiating \eqref{endpoints} yields
\begin{align}\label{endpoints diff}
\eta_0(0) = \dot u_+ + g_0, \xi_0(0) = -f_0(u_+)
\end{align}
and similarly
\begin{align*}
\eta_0(1) = - \dot u_- - g_0, \xi_0(1) = f_0(u_-)
\end{align*}
where $\dot{u}_\pm = \frac{\pl}{\pl s}\Big|_{s=0} u_\pm.$

By assumption, the first variation of the area enclosed by the arc is zero
\begin{align}
\frac{\pl A}{\pl s} \Big|_{s=0} = \int_{u_-}^{u_+} f_0 \lt| \frac{\pl X}{\pl u}\rt| du + \int_0^1 \eta_0 \lt| \frac{\pl\sigma}{\pl x} \rt| dx =0. \label{little area preserving}
\end{align}
Together with the evolution of arc length (see (3.3) of \cite{AB})
\begin{align}
\frac{\pl}{\pl s} \lt| \frac{\pl\sigma}{\pl x} \rt| = \eta k  \lt| \frac{\pl\sigma}{\pl x}\rt| + \frac{\pl\xi}{\pl x},
\end{align}
we obtain the first variation of the length of the arc 
\begin{align}
\frac{\pl L}{\pl s} \Big|_{s=0} &= \int_0^1 \eta_0 k \lt| \frac{\pl\sigma}{\pl x} \rt| \,dx + \xi_0(1) - \xi_0(0) \\
&= -k \int_{u_-}^{u_+} f_0 \,du + f_0(u_-) + f_0(u_+) =: l(u)
\end{align}
noting that it does not depend on how the arc moves. 

At $s=0$, we have $u_+ = u_- + 2b$, $k = \cot b$ and hence
\[ l(u) = -\cot b \int_u^{u+2b} f_0(\tilde u)\,d\tilde u + f_0(u) + f_0(u+2b). \]
Since $X$ preserves enclosed area, we have $\int_0^{2\pi} f\,du=0$ and hence $\int_0^{2\pi} l \,du =0$.

If $l(u)$ is not identically zero, then there exists a prefect arc with $\frac{\pl L}{\pl s} \Big|_{s=0} < 0$ and the isoperimetric profile decreases. However, it is shown in Appendix A that there exists nontrivial variation such that $l(u) \equiv 0$. Therefore, we resort to the second variations.

\subsection{Second variations}
Firstly, since $\sigma(x,s)$ always encloses area $a$, we further differentiate \eqref{little area preserving} to get
\begin{align}\label{little area2}
\begin{split}
0 &= \frac{\pl}{\pl s}\Big|_{s=0} \lt( \int_0^1 \eta \lt| \frac{\pl\sigma}{\pl x}\rt| \,dx + \int_{u_-}^{u_+} f \lt| \frac{\pl X}{\pl u}\rt| \,du \rt)\\
&= \int_0^1 \frac{\pl\eta}{\pl s}\Big|_{s=0} \lt| \frac{\pl\sigma}{\pl x}\rt| + \eta_0 \lt( \eta_0 k \lt| \frac{\pl\sigma}{\pl x}\rt| + \frac{\pl\xi_0}{\pl x} \rt) \,dx + f_0(u_+) \dot{u}_+ - f_0(u_-)\dot{u}_- \\
&\quad + \int_{u_-}^{u_+} \frac{\pl f}{\pl s}\Big|_{s=0} + f(f + \frac{\pl g_0}{\pl u}) \,du.
\end{split}
\end{align}
By the evolution of unit tangent and normal vector of $\sigma$ (see page 514 of \cite{AB})
\begin{align*}
\frac{\pl \mathbf{t}}{\pl s} \Big|_{s=0} = \tilde\vphi \mathbf{n},\\
\frac{\pl\mathbf{n}}{\pl s} \Big|_{s=0} = -\tilde\vphi \mathbf{t}  
\end{align*}
with
\begin{align*}
\tilde\vphi = \frac{\frac{\pl \eta_0}{\pl x}}{\lt| \frac{\pl\sigma}{\pl x} \rt|} - k \xi_0,
\end{align*}
we get the second derivative of arc length
\begin{align*}
\frac{\pl^2}{\pl s^2}\Big|_{s=0} \lt| \frac{\pl\sigma}{\pl x}\rt| = \frac{\pl\eta}{\pl s} k \lt| \frac{\pl\sigma}{\pl x} \rt| - \frac{\pl}{\pl x} (\eta_0 \tilde\vphi) + \frac{\pl}{\pl x} \lt( \frac{\pl\xi}{\pl s}\Big|_{s=0} \rt) + \tilde\vphi \frac{\pl\eta_0}{\pl x}.
\end{align*}
Combining it with \eqref{little area2} we obtain the second variation of the length of the arc
\begin{align*}
\frac{\pl^2 L}{\pl s^2}\Big|_{s=0} &= \int_0^1 - k^2 \eta_0^2 \lt| \frac{\pl\sigma}{\pl x} \rt| + \frac{\lt( \frac{\pl\eta_0}{\pl x}\rt)^2}{\lt| \frac{\pl\sigma}{\pl x}\rt|} \,dx \\
&\quad + k \lt[ -\eta_0\xi_0 \Big|_{x=0}^{x=1}  - f_0(u_+)\dot{u}_+ + f_0(u_-)\dot{u}_- - \int_{u_-}^{u_+} \frac{\pl f}{\pl s}\Big|_{s=0} + f(f+ \frac{\pl g_0}{\pl u}) \,du  \rt]\\
&\quad + k \lt[ \frac{\pl\xi}{\pl s}\Big|_{s=0} - \eta_0 \tilde\vphi \rt]_{x=0}^{x=1}.
\end{align*}
To simplify the last line, we differentiate \eqref{endpoints} in $s$ twice 
\begin{align*}
&\frac{\pl\eta}{\pl s} \mathbf{n} - \eta_0 \tilde\vphi \mathbf{t} + \frac{\pl\xi}{\pl s} \mathbf{t} + \xi_0 \tilde\vphi \mathbf{n}\\
&= \frac{\pl^2 X}{\pl s \pl u} \dot{u}_\pm + \frac{\pl X}{\pl u} \ddot{u}_\pm  + \frac{\pl f}{\pl s}\mathbf{N} - f(\frac{\pl f}{\pl u} - g_0)\mathbf{T} + \frac{\pl g}{\pl s}\mathbf{T} + g(\frac{\pl f}{\pl u} - g_0)\mathbf{N}.
\end{align*}
Since $\mathbf{t}(0) = - \mathbf{N}(u_+), \mathbf{t}(1) = \mathbf{N}(u_-), \mathbf{n}(0) = \mathbf{T}(u_+), \mathbf{n}(1) = \mathbf{T}(u_-)$, we get
\begin{align*}
\frac{\pl\xi}{\pl s}\Big|_{s=0} - \eta_0 \tilde\vphi = - \eta_0 (\frac{\pl f_0}{\pl u} - g_0)(u_+) - \frac{\pl f}{\pl s}\Big|_{s=0}(u_+)
\end{align*}
at $x=0$ and
\begin{align*}
\frac{\pl\xi}{\pl s}\Big|_{s=0} - \eta_0 \tilde\vphi = - \eta_0(\frac{\pl f}{\pl u} - g_0)(u_-) + \frac{\pl f}{\pl s}\Big|_{s=0}(u_-)
\end{align*}
at $x=1$.
On the other hand, since $\Omega_s$ has area $\pi$, we have
\begin{align}\label{large area2}
0 = \frac{\pl}{\pl s}\Big|_{s=0} \int_0^{2\pi} f \lt|\frac{\pl X}{\pl u} \rt|\,du = \int_0^{2\pi} \frac{\pl f}{\pl s}\Big|_{s=0} + f_0(f_0 + \frac{\pl g_0}{\pl u}) \,du.
\end{align}

Next, to kill the effect of rigid motions in $\R^2$, we require that $X(u,s)$ is a normal variation in the first approximation:
\begin{align*}
g_0(u) \equiv 0.
\end{align*}
Moreover, we move the arcs tangentially in the first approximation:
\begin{align*}
\eta_0(x) \equiv 0.
\end{align*}
Putting these together, we obtain
\begin{align*}
\frac{\pl^2 L}{\pl s^2}\Big|_{s=0} = -k \int_{u_-}^{u_+} \lt( \frac{\pl f}{\pl s}\Big|_{s=0} + f^2 \rt) \,du + \frac{\pl f}{\pl s}\Big|_{s=0}(u_-) + \frac{\pl f}{\pl s}\Big|_{s=0}(u_+).
\end{align*}
Recall that we fix one arc in the above calculations. Taking all arcs into account, we obtain, by \eqref{large area2}, 
\begin{align*}
\int_0^{2\pi} \frac{\pl^2 L}{\pl s^2}\Big|_{s=0}(u) \,du = -2 \int_0^{2\pi} f_0^2(u)\,du <0
\end{align*}
for nontrivial variations. Hence, at least one arc becomes shorter while enclosing the same amount of area in the variation. In summary, we prove

\begin{theorem}\label{perturbation}
Let $\Omega_s, 0\le s < \epsilon$ be a family of domains with $\Omega_0 = B_1$ and $\mbox{area}(\Omega_s) = \pi$. We assume that $\Omega_s$ does not arise from rigid motion. Then for any $0 < A < \pi$ the following dichotomy on the perfect arcs that minimizes $I_{B_1}(A)$ holds. 
\begin{enumerate}
\item There exists a perfect arc whose length satisfies $\frac{d}{ds}\big|_{s=0} L < 0.$
\item We have $\frac{d}{ds}\big|_{s=0} L =0$ for all perfect arcs and there exists a perfect arc whose length satisfies $\frac{d^2}{ds^2}|_{s=0} L <0$ under tangential variation.
\end{enumerate}     
\end{theorem}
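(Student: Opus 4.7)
The plan is to organize the first- and second-variation identities developed in the setup around the dichotomy on whether $l(u)$ vanishes identically, matching the two alternatives in the theorem. Alternative (1) is immediate: if $l(u) \not\equiv 0$, area preservation gives $\int_0^{2\pi} f_0\,du = 0$ and hence $\int_0^{2\pi} l(u)\,du = 0$, so $l$ is strictly negative somewhere, producing a perfect arc with $\frac{d}{ds}\big|_{s=0} L < 0$.

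For alternative (2), I assume $l(u) \equiv 0$ and work in the gauge $g_0 \equiv 0$ (a reparametrization of $X$ in $u$) together with the tangential arc gauge $\eta_0 \equiv 0$. The second-variation identity derived in the setup then reduces to
\[
\frac{\pl^2 L}{\pl s^2}\Big|_{s=0}(u) = -k\int_u^{u+2b}\lt(\frac{\pl f}{\pl s}\Big|_{s=0} + f^2\rt) d\tilde u + \frac{\pl f}{\pl s}\Big|_{s=0}(u) + \frac{\pl f}{\pl s}\Big|_{s=0}(u+2b).
\]
Integrating in $u$ over $[0,2\pi]$, Fubini and $2\pi$-periodicity evaluate the double integral to $2b\int_0^{2\pi}(\pl_s f|_{s=0}+f_0^2)\,d\tilde u$, and the boundary terms contribute $2\int_0^{2\pi}\pl_s f|_{s=0}\,du$. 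Identity \eqref{large area2} with $g_0 \equiv 0$ gives $\int_0^{2\pi}\pl_s f|_{s=0}\,du = -\int_0^{2\pi} f_0^2\,du$, and with $k = \cot b$ the coefficient of $\int f_0^2$ collapses as $-(2-2bk) - 2bk = -2$, leaving
\[
\int_0^{2\pi}\frac{\pl^2 L}{\pl s^2}\Big|_{s=0}(u)\,du = -2\int_0^{2\pi} f_0^2\,du.
\]
Since $\Omega_s$ does not arise from rigid motion, $f_0 \not\equiv 0$ and the average is strictly negative, so some perfect arc satisfies $\frac{d^2}{ds^2}\big|_{s=0} L < 0$, yielding alternative (2).

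The main obstacle is the admissibility of the tangential gauge $\eta_0 \equiv 0$: combined with $g_0 \equiv 0$, the endpoint identity \eqref{endpoints diff} forces $\dot u_\pm = 0$ and the area identity \eqref{little area preserving} forces $\int_u^{u+2b} f_0\,d\tilde u = 0$ for every arc. I would verify compatibility with $l(u) \equiv 0$ via a Fourier decomposition of the constraint $l = 0$, which pins $f_0$ down to the modes $c_n e^{inu}$ with $\tan(nb) = n\tan b$, or else relax to a more general tangentially dominant gauge in which the same cancellation via \eqref{large area2} still delivers the averaged identity. Once the gauge reduction is in place, the two alternatives follow directly from the first-variation mean-value argument and the second-variation averaging identity.
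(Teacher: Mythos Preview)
Your argument is exactly the paper's: the dichotomy on $l(u)$, the gauges $g_0\equiv 0$ and $\eta_0\equiv 0$, the reduced second-variation formula, and the averaging via \eqref{large area2} to $\int_0^{2\pi}\partial_s^2 L|_{s=0}\,du = -2\int_0^{2\pi}f_0^2\,du$ all match the paper line for line. The compatibility concern you flag is not addressed in the paper either---it simply imposes the tangential gauge (hence ``under tangential variation'' in the statement of the theorem)---so your proposal is at least as complete as the paper's own proof; note, though, that your Fourier route does not by itself resolve it, since for a mode with $\tan nb = n\tan b$ one has $\int_u^{u+2b}e^{in\tilde u}\,d\tilde u = \tfrac{2\sin nb}{n}\,e^{in(u+b)}$, which is generically nonzero.
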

 
Consequently, the isoperimetric profile must decrease. This completes the proof of Theorem \ref{perturbation short}.

\appendix

\section{Local existence of perfect arcs}
Given two points $C_1$ and $C_2$ on a curve $C \subset \R^2$, denote the unit tangent and unit normal vector at $C_1$ and $C_2$ by $T_1,T_2$ and $N_1,N_2$. We observe an elementary criterion for two points to be joined by a perfect arc.
\begin{prop}\label{Condition_of_two_endpoints}
For two points $C_1 = C(s_1),C_2=C(s_2)$ on a curve $C$, a necessary and sufficient condition that there is a perfect arc passing through $C_1$ and $C_2$ is either the function $f: C \times C \rightarrow \mathbb{R}$ satisfies
\begin{align*}\label{f=0}
    f(s_1,s_2):=(C_1-C_2)\cdot(N_1+N_2) =0 \mbox{ with } N_1+N_2\neq0
\end{align*} or 
there is a straight line passing through $C_1$ and $C_2$ with direction $N_1=-N_2$.
\end{prop}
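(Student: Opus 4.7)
My plan is to split by the geometric type of the perfect arc. If it is a straight segment from $C_1$ to $C_2$, then orthogonality at both endpoints forces the unit direction $(C_2-C_1)/|C_2-C_1|$ to be simultaneously parallel to $N_1$ and $N_2$, so $N_1 = \pm N_2$. Consistency of orientation (the segment enters $C$ at $C_1$ and exits at $C_2$, so the two outward normals must point in opposite directions along the segment) then singles out $N_1 = -N_2$ with $C_2 - C_1$ parallel to $N_1$, which is precisely the second alternative in the statement; the converse is immediate. Henceforth I assume the arc is a proper circular arc with center $p$ and radius $r>0$.

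The second step is to locate $p$. Because the arc meets $C$ orthogonally at $C_i$, its tangent at $C_i$ is parallel to $N_i$, and therefore the radius vector $p-C_i$ is perpendicular to $N_i$, i.e.\ $(p-C_i)\cdot N_i = 0$. Hence $p$ lies on the tangent line to $C$ at $C_1$ and on the tangent line to $C$ at $C_2$. Under the hypothesis $N_1+N_2\neq 0$ (and $C_1\neq C_2$), these tangent lines are transverse and meet in a unique point, which is the only candidate for $p$. What remains is to check that this candidate is automatically equidistant from $C_1$ and $C_2$ precisely when $f(s_1,s_2)=(C_1-C_2)\cdot(N_1+N_2)=0$.

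The key computation is short. Writing $p-C_1 = \alpha T_1$ and $p-C_2 = \beta T_2$ (both consequences of $(p-C_i)\cdot N_i = 0$), the equidistance condition $|p-C_1| = |p-C_2|$ becomes $\alpha^2 = \beta^2$. Using $C_i\cdot N_i = p\cdot N_i$ to rewrite $(C_1-C_2)\cdot N_j = (p-C_i)\cdot N_j$ for $i\neq j$, and then the antisymmetry $T_1\cdot N_2 = -T_2\cdot N_1$ coming from the consistent orientation of $\{T,N\}$, one arrives at
\[ (C_1-C_2)\cdot(N_1+N_2) = -(\alpha+\beta)(T_1\cdot N_2). \]
Since $T_1\cdot N_2\neq 0$ whenever the two tangent lines are transverse, $f=0$ is equivalent to $\alpha+\beta=0$, hence to $|\alpha|=|\beta|$, and so produces the desired circle; conversely, any circular perfect arc satisfies the same identity, forcing $f=0$. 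The main delicacy I expect is the orientation bookkeeping that yields the antisymmetry above, together with having to dispose of the alternative root $\alpha=\beta$ arising from the twin factor $(C_1-C_2)\cdot(N_1-N_2)$; for a strictly convex $C$ the latter factor is strictly signed for distinct points, so that spurious root never occurs in the setting of this note.
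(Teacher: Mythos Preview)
Your argument is correct and rests on the same geometric fact as the paper's, but you carry it out explicitly where the paper is terse. The paper's entire proof is the single observation that the chord $C_1-C_2$ makes equal angles with $T_1$ and $T_2$; this is the isoceles--triangle statement that the center $p$ of the arc lies on both tangent lines and is equidistant from $C_1$ and $C_2$. You reconstruct this algebraically: writing $p-C_1=\alpha T_1$, $p-C_2=\beta T_2$ and using the $2$D antisymmetry $T_1\cdot N_2=-T_2\cdot N_1$, you obtain the identity $(C_1-C_2)\cdot(N_1+N_2)=-(\alpha+\beta)(T_1\cdot N_2)$, from which $f=0\Leftrightarrow\alpha=-\beta\Leftrightarrow|p-C_1|=|p-C_2|$.

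What your route buys is that it surfaces the sign ambiguity $\alpha=\pm\beta$ that the paper's one line leaves implicit. You dispose of the spurious root $\alpha=\beta$ via the convexity inequality $(C_1-C_2)\cdot(N_1-N_2)>0$ for distinct points on a strictly convex curve, which is exactly the paper's ambient hypothesis. This makes your version slightly more robust: without convexity the equal--angle observation alone does not pin down the sign, and the proposition as literally stated can fail. In the paper's setting both arguments are valid; yours is longer but more self-contained.
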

\begin{proof}
The assertion follows by observing that 
the angle between $C_1-C_2$ and $T_1$ is equal to the angle between $C_1-C_2$ and $T_2$.
\end{proof}

\begin{prop}
Let $\gamma$ be a perfect arc of $C$ with nonzero curvature and $C_1$ and $C_2$ be its endpoints. Denote the curvature of $C$ at $C_1$ and $C_2$ by $k_1$ and $k_2$. Then there exists a nontrivial family of perfect arcs $\gamma(t), -\delta < t < \delta$ such that $\gamma(0)=\gamma$ unless 
\begin{align*}
    k_1(C_1-C_2) = N_2 - N_1= k_2(C_1-C_2).
\end{align*} 
\end{prop}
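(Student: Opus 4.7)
The plan is to apply the implicit function theorem to the scalar equation $f(s_1,s_2) = (C_1-C_2)\cdot(N_1+N_2) = 0$ from Proposition \ref{Condition_of_two_endpoints}, which characterizes pairs of points on $C$ joinable by a perfect arc in the nondegenerate regime $N_1+N_2\neq 0$. A nontrivial one-parameter family $\gamma(t)$ of perfect arcs exists as soon as $\nabla f \neq (0,0)$ at the base pair of arclength parameters, so everything reduces to identifying when both partials vanish.

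Using the planar Frenet formulas $C_i' = T_i$ and $N_i' = -k_i T_i$, a direct computation gives
\begin{align*}
\frac{\pl f}{\pl s_1} &= T_1\cdot N_2 - k_1 (C_1-C_2)\cdot T_1,\\
\frac{\pl f}{\pl s_2} &= -T_2\cdot N_1 - k_2 (C_1-C_2)\cdot T_2.
\end{align*}
Next I would invoke a geometric identity special to circular perfect arcs of curvature $k\neq 0$: the normals at the two endpoints pass through the common center of the underlying circle of radius $1/|k|$, which yields $k(C_1-C_2) = N_2 - N_1$. Dotting this relation with $T_1$ and $T_2$, respectively, and substituting into the partials collapses them to
\[
\frac{\pl f}{\pl s_1} = \lt(1-\frac{k_1}{k}\rt) T_1\cdot N_2, \qquad \frac{\pl f}{\pl s_2} = \lt(\frac{k_2}{k}-1\rt) T_2\cdot N_1.
\]

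To finish, I would verify that the factors $T_1\cdot N_2$ and $T_2\cdot N_1$ cannot vanish: if $T_1\cdot N_2=0$ then $N_2$ is parallel to $N_1$, so either $N_2=N_1$, which together with $k(C_1-C_2)=N_2-N_1=0$ and $k\neq 0$ forces $C_1=C_2$, or $N_2=-N_1$, contradicting the standing hypothesis $N_1+N_2\neq 0$; the argument for $T_2\cdot N_1$ is symmetric. Consequently $\nabla f = 0$ is equivalent to $k=k_1=k_2$, which via the identity $k(C_1-C_2)=N_2-N_1$ is exactly the exceptional condition $k_1(C_1-C_2)=N_2-N_1=k_2(C_1-C_2)$. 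Whenever this condition fails, the implicit function theorem produces a smooth curve $(s_1(t),s_2(t))$ of solutions through the base point, and the corresponding circular perfect arcs furnish the desired family $\gamma(t)$. The main obstacle I anticipate is the last nondegeneracy check for $T_i\cdot N_j$; everything else is routine Frenet book-keeping combined with the elementary center-of-circle identity.
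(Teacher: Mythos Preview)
Your argument breaks down at the ``center-of-circle identity'' $k(C_1-C_2)=N_2-N_1$, which is false. Because the perfect arc meets $C$ orthogonally, the tangent of the arc at $C_i$ is $\pm N_i$; hence the radius vector $C_i-O$, being perpendicular to the arc's tangent, is parallel to $T_i$, \emph{not} to $N_i$. So the normal lines of $C$ do not pass through the center $O$. A concrete counterexample is the standard perfect arc in $B_1$ at $\theta=\pi/4$: there $C_1-C_2=(0,\sqrt2)$, $N_2-N_1=(0,-\sqrt2)$, and $k=\cot(\pi/4)=1$, so $k(C_1-C_2)\neq N_2-N_1$. The relation one actually obtains from the common center is of the form $k(C_1-C_2)=\pm T_1\pm T_2$, and substituting this into the partials does \emph{not} produce your factorization $(1-k_i/k)\,T_i\cdot N_j$. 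In fact your final equivalence ``$\nabla f=0\Longleftrightarrow k=k_1=k_2$'' is also false: in the same $\theta=\pi/4$ example $k_1=k_2=k=1$, yet $\partial f/\partial s_1=T_1\cdot N_2-k_1(C_1-C_2)\cdot T_1=-1-1=-2\neq 0$. Note that the arc curvature $k$ does not even appear in the exceptional condition of the proposition.

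The paper's argument avoids any such identity. From $\partial f/\partial s_1=0$ one reads off that $N_2-k_1(C_1-C_2)$ is orthogonal to $T_1$, hence equals $\alpha N_1$ for some scalar $\alpha$; taking the inner product with $N_1+N_2$ and using $f=0$ together with $N_1+N_2\neq 0$ forces $\alpha=1$, i.e.\ $k_1(C_1-C_2)=N_2-N_1$. The symmetric computation from $\partial f/\partial s_2=0$ gives $k_2(C_1-C_2)=N_2-N_1$, and the two together are exactly the stated exceptional condition. Outside that case the implicit function theorem applies as you intended.
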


\begin{proof}
Consider the two-point function $f: C \times C \rightarrow \mathbb{R}$ defined in the previous proposition. The partial derivatives of $f$ at $(C_1,C_2)$ are given by
\begin{align*}
    \frac{\partial f}{\partial s_1} &= T_1 \cdot N_2 - (C_1 - C_2) \cdot k_1T_1 \\
    \frac{\partial f}{\partial s_2} &= - T_2 \cdot N_1 - (C_1 - C_2) \cdot k_2T_2 
\end{align*}
We claim that  $\frac{\partial f}{\partial s_1}(C_1,C_2)$ and $\frac{\partial f}{\partial s_2}(C_1,C_2)$ do not vanish at the same time. Indeed, if $\frac{\partial f}{\partial S_1} (C_1,C_2) = \frac{\partial f}{\partial s_2}(C_1,C_2)=0$, then \[ N_2 - k_1(C_1-C_2) = \alpha N_1, \quad N_1 + k_2(C_1 - C_2) = \beta N_2\] for some constants $\alpha, \beta$. Since $\gamma$ has nonzero curvature, $N_1 + N_2 \neq 0$. Taking inner product with $N_1+N_2$, we get $\alpha=\beta =1$ and hence
\[ k_1(C_1-C_2) = N_2 - N_1= k_2(C_1-C_2).\]

Without loss of generality, we assume $\frac{\partial f}{\partial s_2}(C_1, C_2) \neq 0$. By the implicit function theorem, there is a function $g(s_1)$ defined on some open interval such that $f(s_1, g(s_1))=0$. By Proposition \ref{Condition_of_two_endpoints}, we obtain a family of perfect arcs. 
\end{proof}

We now present a local existence result of perfect arcs around a vertex. It is reminiscent of the existence of constant mean curvature foliation in a neighborhood of a point \cite{Ye}.
\begin{proposition}
Suppose there is a family of perfect arcs shrinking to $p$. Then $p$ must be a vertex of $C$. Conversely, if $p \in C$ is a non-degenerated
vertex ($k'=0$ but  $k''\neq 0$), then there is a family of perfect arcs shrinking to $p$.
\end{proposition}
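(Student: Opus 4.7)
The plan is to solve $f(s_1, s_2) = 0$ (the criterion from Proposition~\ref{Condition_of_two_endpoints}) near the diagonal point $(s_0, s_0)$, where $C(s_0) = p$, by first dividing out the automatic zeros and then applying the implicit function theorem. Work in the coordinates $u = s_1 + s_2 - 2 s_0$, $v = s_1 - s_2$.

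The function $f$ is antisymmetric in $(s_1, s_2)$, hence odd in $v$. The appendix formula
\[
    \partial_{s_1} f(s_1, s_2) = T_1 \cdot N_2 - k_1\,(C_1 - C_2)\cdot T_1
\]
gives $\partial_{s_1} f(s, s) = T(s)\cdot N(s) = 0$, and antisymmetry forces $\partial_{s_2} f(s, s) = 0$ as well; hence the linear-in-$v$ coefficient in the Taylor expansion of $f$ vanishes identically in $u$, and we obtain $f = v^3\, h(u, v)$ for a smooth function $h$ even in $v$. To identify $h(0, 0)$, I use arclength with the Frenet frame at $s_0$ (appendix convention $C'' = kN$, $N' = -kT$), expand $C(s_0 + \epsilon)$ and $N(s_0 + \epsilon)$ to third order, and substitute into $(C_1 - C_2)\cdot(N_1 + N_2)$. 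The degree-2 contributions cancel and the cubic term simplifies to $f = -\tfrac{k'(s_0)}{6}\,v^3 + O(\epsilon^4)$, so $h(0, 0) = -k'(s_0)/6$. If $p$ is not a vertex then $h(0, 0) \neq 0$; by continuity $h$ is nonzero in a neighbourhood of the origin, so $f = v^3 h = 0$ forces $v = 0$. No family of perfect arcs with distinct endpoints can shrink to $p$, proving the first direction.

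For the converse, assume $k'(s_0) = 0$ and $k''(s_0) \neq 0$, so $h(0, 0) = 0$. Carrying the expansion to degree 4 (under $k'(s_0) = 0$) and tracking cancellations between the $T$- and $N$-component contributions produces $f = -\tfrac{k''(s_0)}{12}\, u\, v^3 + O(\epsilon^5)$, so $h(u, v) = -\tfrac{k''(s_0)}{12}\, u + O(u^2 + v^2)$ and $\partial_u h(0, 0) = -k''(s_0)/12 \neq 0$. The implicit function theorem applied to $h = 0$ yields a smooth curve $u = u(v)$ with $u(0) = 0$; evenness of $h$ in $v$ gives $\partial_v h(0, 0) = 0$, hence $u'(0) = 0$ and $u(v) = O(v^2)$. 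The pairs $\bigl(s_1(v), s_2(v)\bigr) = \bigl(s_0 + \tfrac{u(v) + v}{2},\, s_0 + \tfrac{u(v) - v}{2}\bigr)$ are distinct for $v \neq 0$, converge to $(s_0, s_0)$ as $v \to 0$, and satisfy $N_1 + N_2 \to 2 N(s_0) \neq 0$; so by Proposition~\ref{Condition_of_two_endpoints} each pair determines a genuine perfect arc, and these arcs shrink to $p$.

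The main obstacle is the degree-4 Taylor bookkeeping: the clean form $-\tfrac{k''(s_0)}{12}\, u\, v^3$ emerges only after several cancellations among terms involving various powers of $k(s_0)$ together with $k''(s_0)$. Once this coefficient is in hand, the remainder of the argument---symmetry factorization plus the implicit function theorem---is conceptually straightforward.
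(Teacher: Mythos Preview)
Your argument is correct and follows the same line as the paper's: both expand $f(s_1,s_2)=(C_1-C_2)\cdot(N_1+N_2)$ near the diagonal, obtain the leading terms $-\tfrac{k'}{6}v^{3}$ and (when $k'=0$) $-\tfrac{k''}{12}\,uv^{3}=-\tfrac{k''}{12}(s_1^2-s_2^2)(s_1-s_2)^2$, and read off necessity and sufficiency from these coefficients. The only difference is in how the converse is finished: the paper substitutes the trial values $s_2=-s_1-\tfrac{k'''\pm 1}{5k''}s_1^{2}$ and invokes the intermediate value theorem, whereas you factor $f=v^{3}h$ and apply the implicit function theorem to $h$---your route is a bit cleaner and yields a smooth one-parameter family directly.
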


\begin{proof}
We parametrize $C$ by arclength $s$. Suppose $p = C(0) =(0,0) $. The local canonical form of plane curves \cite[Section 1.6]{doCarmo} says
\begin{align*}
C(s) &= \lt(s -  \frac{k^2 s^3}{3!} \rt)t + \lt( \frac{s^2 k}{2} + \frac{s^3 k'}{3!} \rt)n + O(s^4), \\
t(s) &= \lt(1 - \frac{k^2s^2}{2} \rt)t + \lt( ks + \frac{k's^2}{2} \rt)n + O(s^3),\\
n(s) &= \lt(1 - \frac{k^2s^2}{2} \rt)n - \lt( ks + \frac{k's^2}{2} \rt)t + O(s^3) 
\end{align*}
where all terms on the right-hand side of the following equation are evaluated at $s=0$.

Suppose $\gamma_t, 0 <t<\epsilon$ is a family of perfect arcs shrinking to $p$ as $t \rw 0$. Let $C_1 = C(s_1), C_2 = C(s_2)$ be the endpoints of $\gamma$ where $s_1$ and $s_2$ depend on $t$ smoothly. Without loss of generality, we assume $s_1(t) = t$ and use $s_1$ as the parameter of the family $\gamma_t$; moreover, we assume $s_2 = O(s_1)$ as $s_1 \rw 0$. Recall that $f(s_1,s_2) = (C_1 - C_2)\cdot (N_1 + N_2) =0$. We compute the expansion of $f(s_1,s_2)$ with respect to $s_1$ to get
\begin{align*}
0 &= (C(s_1) - C(s_2))(n(s_1) - n(s_2))\\ &= - \frac{k'}{6}(s_1 - s_2)^3 + O(s_1^4).
\end{align*}  
By the assumption of $s_2$, $k'=0$ and hence $p$ is a vertex of $C$. 

For the converse, suppose $p$ is a non-degenerated vertex. We expand $\alpha(s)$ to higher order:
\begin{align*}
C(s) = \lt( s - \frac{k^2}{6}s^3 + \frac{k^4-4kk''}{120} s^5 \rt)t + \lt( \frac{k}{2}s^2 + \frac{k''-k^3}{24} s^4 + \frac{k'''}{120} s^5 \rt) + O(s^6)
\end{align*}
and it follows that
\begin{align*}
n(s) = \lt( 1 - \frac{k^2}{2}s^2 + \frac{k^4 - 4kk''}{24} s^4 \rt)n - \lt( ks + \frac{k''-k^3}{6}s^3 + \frac{k'''}{24} s^4 \rt)t + O(s^5).
\end{align*}
\\
\\
\noindent By direct computation,
$0 = f(s_1,s_2) = -\frac{k''}{12}(s_1^2 - s_2^2)(s_1 - s_2)^2 + O(s_1^5).$ 
\\
Since $k''\neq 0$, we obtain $a_1 = -1.$ We have in the next order
\begin{align*}
0 &= f(s_1,s_2) \\
&=(s_1-s_2) \lt[ (s_1 + s_2)A -\frac{k'''}{24}\lt( s_1^4 + s_2^4 \rt) + \frac{k'''}{60}\lt( s_1^4 + s_1^3s_2 + s_1^2s_2^2 + s_1s_2^3 + s_2^4 \rt)\rt] + O(s_1^6)
\end{align*}
where
\[ A =  -\frac{k''-k^3}{6}(s_1^2 - s_1 s_2 + s_2^2) + \frac{k^3}{6}(s_1^2 + s_1s_2 + s_2^2) + \lt( \frac{k''}{12} - \frac{k^3}{3} \rt)\lt( s_1^2 + s_2^2\rt). \]
Plugging in $s_2 = -s_1 + a_2 s_1^2 + O(s_1^3)$, the term in the bracket simplifies to
\begin{align*}
s_1^4 \lt( -\frac{k''}{3} a_2 - \frac{1}{15} k''' \rt) + O(s_1^5).
\end{align*}
Since $k''\neq 0$, for $s_1$ sufficiently small we can find two points $q^+, q^-$ on $\pl\Omega$  such that $f(C_1,q^+)>0$ and $f(C_1, q^-) <0$. For example, take $s_2 = -s_1 - \frac{k''' \pm 1}{5k''}s_1^2$. By the intermediate value theorem, there exists a point $q$ between $q^+$ and $q^-$ satisfying $f(C_1,q) =0$. $C_1$ and $q$ would give us a perfect arc. 
\end{proof}

We close this section by commenting the hypothesis on the number of vertices in Theorem \ref{bi-symmetric}. The above proposition shows that additional vertex may lead to additional minimizers of the variational problem \eqref{varitional problem}. Nonuniqueness of minimizers then cause isoperimetric profile non-differentiable. In general the isoperimetric profile of a compact Riemannian manifold is at best {\it piecewise} differentiable, see \cite{Bayle, GNP} for example.
 
\section{Perturbations that preserve the isoperimetric profile of $B_1$ in the first variation}
In Section 4.1 we show that a periodic function $f:[0,2\pi] \rw \R$ satisfying 
\[ l(u) := -\cot b \int_u^{u+2b} f(\tilde u)\,d\tilde u + f(u) + f(u + 2b) \equiv 0 \]
gives rise to a perturbation that causes isoperimetric profile of $B_1$ to have zero first variation. It is elementary to show that the relation holds for $\cos u$ and $\sin u$, which corresponds to translation of unit circle. For the perturbation to preserve enclosed area $\pi$, we also require $\int_0^{2\pi} f \,du =0$. To construct nontrivial perturbations, we consider Fourier series $f(\theta) = \sum'_n c_n e^{in\theta}$ with $c_n \in \R$. Here $\sum'$ means summation from $-\infty$ to $+\infty$ except $n =0.$ Direct computation yields
\[ l(\theta) = \sum\nolimits' \frac{c_n e^{in\theta}}{in} \lt( -\cos b (e^{inb}-1) + in \sin b (1 + e^{inb}) \rt) \]
and
\begin{align*} &-\cos b (e^{inb}-1) + in \sin b (1 + e^{inb})\\
&= 2 \sin nb (\cos b \sin nb - n \sin b \cos nb) + i \cdot 2 \cos nb (- \cos b \sin nb + n \sin b \cos nb)
\end{align*}
The figure below shows the implicit equation $\cos y \sin xy - x \sin y \cos xy =0$. Except the vertical lines $x=\pm 1, 0$ (they correspond to translations), every intersection of the curve with $y = n, n \in \mathbb{Z}$ gives rise to a nontrivial $f$ with $l(u) \equiv 0$.

\begin{figure}[h]
\centering
\includegraphics[scale=0.4]{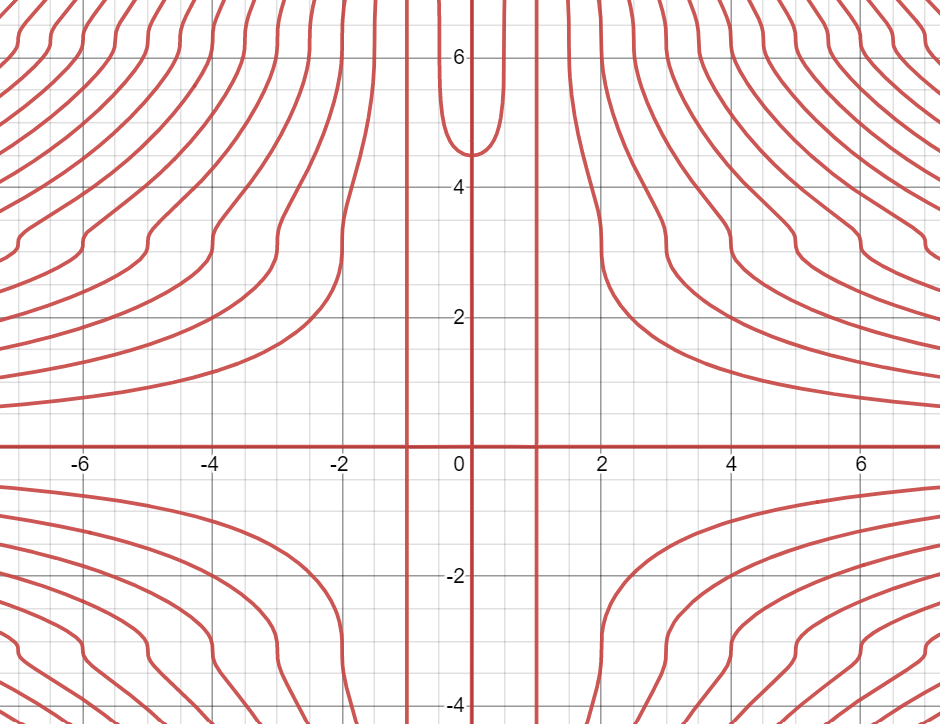}
\end{figure}

%    Text of article.

%    Bibliographies can be prepared with BibTeX using amsplain,
%    amsalpha, or (for "historical" overviews) natbib style.
\bibliographystyle{amsplain}

\begin{thebibliography}{EFKNT}
\bibitem{AB} Andrews, Ben; Bryan, Paul {\it A comparison theorem for the isoperimetric profile under curve-shortening flow.} Comm. Anal. Geom. 19 (2011), no. 3, 503--539.
\bibitem{Bayle}  Bayle, Vincent A differential inequality for the isoperimetric profile. Int. Math. Res. Not. 2004, no. 7, 311--342.
\bibitem{Berry} Berry, John; Bongiovanni, Eliot; Boyer, Wyatt; Brown, Bryan; Dannenberg, Matthew; Gallagher, Paul; Hu, David; Liang, Jason; Loving, Alyssa; Martin, Zane; Miller, Maggie; Perpetua, Byron; Tammen, Sarah; Zeng, Yingyi {\it The convex body isoperimetric conjecture in $\mathbb{R}^2$.} Rose-Hulman Undergrad. Math. J. 18 (2017), no. 2, Art. 2, 9--33.
\bibitem{doCarmo} do Carmo, Manfredo P. Differential geometry of curves and surfaces. Translated from the Portuguese. Prentice-Hall, Inc., Englewood Cliffs, N.J., 1976.
\bibitem{EFKNT} Esposito, L.; Ferone, V.; Kawohl, B.; Nitsch, C.; Trombetti, C. {\it The longest shortest fence and sharp Poincar\'{e}-Sobolev inequalities.} Arch. Ration. Mech. Anal. 206 (2012), no. 3, 821--851.
\bibitem{GNP} Grimaldi, Renata; Nardulli, Stefano; Pansu, Pierre {\it Semianalyticity of isoperimetric profiles.} Differential Geom. Appl. 27 (2009), no. 3, 393--398.
\bibitem{Hutchings} M. Hutchings, Soap bubbles and isoperimetric problems, \url{https://math.berkeley.edu/~hutching/pub/bubbles.html}.
\bibitem{Morgan} F. Morgan, Convex body isoperimetric conjecture \url{https://sites.williams.edu/Morgan/2010/07/03/convex-body-isoperimetric-conjecture/}
\bibitem{Pestov} Pestov, G.; Ionin, V.
{\it On the largest possible circle imbedded in a given closed curve.} (Russian)
Dokl. Akad. Nauk SSSR 127 1959 1170--1172.
\bibitem{Petrunin} Anton Petrunin, Sergio Zamora Barrera, {\it What is differential geometry: curves and surfaces.} \url{https://arxiv.org/abs/2012.11814}.
\bibitem{Pankrashkin} Pankrashkin, Konstantin {\it An inequality for the maximum curvature through a geometric flow.} Arch. Math. (Basel) 105 (2015), no. 3, 297--300.
\bibitem{Ye} Ye, Rugang {\it Foliation by constant mean curvature spheres.} Pacific J. Math. 147 (1991), no. 2, 381--396.
\end{thebibliography}
%    Insert the bibliography data here.

\end{document}